\newcommand{\Rmnum}[1]{\expandafter\@slowromancap\romannumeral #1@}
\newtheorem{assumption}[theorem]{Assumption}
\newtheorem{algorithm}[theorem]{Algorithm}
\definecolor{mygreen}{RGB}{44,85,17}
\definecolor{myblue}{RGB}{34,31,217}
\definecolor{mybrown}{RGB}{194,164,113}
\definecolor{myred}{RGB}{255,66,56}
\definecolor{mypurple}{RGB}{255 250 205}
\definecolor{myjackieblue}{RGB}{11,23,70}
\definecolor{mygrey}{RGB}{230 230 250}
\definecolor{rot}{rgb}{0.000,0.000,0.000}
\newcommand{\tcr}{\textcolor{rot}}
\definecolor{myblue}{rgb}{0.000,1.000,0.000}
\begin{document}

\title{Bayesian Approach to Inverse \tcr{Time-harmonic Acoustic Scattering with Phaseless Far-field Data}}


\author{Zhipeng Yang\footnotemark[3]\ , Xinping Gui\footnotemark[3]\ , Ju Ming\footnotemark[2] \and Guanghui Hu\footnotemark[3]\ }

\date{}
\maketitle

\renewcommand{\thefootnote}{\fnsymbol{footnote}}
\footnotetext[2]{School of Mathematics and Statistics, Huazhong University of Science and Technology, Wuhan 430074, P. R. China, (jming@hust.edu.cn).}
\footnotetext[3]{Department of Applied Mathematics, Beijing Computational Science Research Center, Beijing 100193, P. R. China, (yangzhp@csrc.ac.cn), (gui@csrc.ac.cn), (hu@csrc.ac.cn, corresponding author).}
\renewcommand{\thefootnote}{\arabic{footnote}}

\begin{abstract}
This paper is concerned with inverse acoustic scattering problem of inferring the position and  shape of a sound-soft obstacle from phaseless far-field data. We propose the Bayesian approach to recover sound-soft disks\tcr{, line cracks and kite-shaped obstacles} through properly chosen incoming waves in two dimensions. Given the Gaussian prior measure, the well-posedness of the posterior measure in the Bayesian approach is discussed. The Markov Chain Monte Carlo (MCMC) method is adopted in the numerical approximation and the preconditioned Crank-Nicolson (pCN) algorithm with random proposal variance is utilized to improve the convergence rate. Numerical examples are provided to illustrate effectiveness of the proposed method.
\end{abstract}

\begin{keywords}
inverse scattering problem, phaseless far-field pattern, Bayesian inference, MCMC
\end{keywords}

\begin{AMS}
    35R30, 35P25, 62F15, 78A46
\end{AMS}

\section{Introduction}

Time-harmonic inverse scattering problems have attracted extensive attention due to their numerous applications in many areas such as radar and sonar detection, geophysical prospection, medical imaging, nondestructive testing and so on. In this paper, we are interested in the inverse problem of reconstructing the location and shape of an acoustically sound-soft obstacle using phaseless far-field data.

The propagation of a time-harmonic incident field $u^{in}$ in a homogeneous and isotropic medium is governed by the Helmholtz equation
\begin{equation}
    \Delta u^{in} + k^{2}u^{in} = 0 \hspace{.2 cm} \text{in} \hspace{.2 cm} \mathbb{R}^{2},
\end{equation}
where $k>0$ is the wavenumber. Let $D\subset \mathbb{R}^2$ be a sound-soft scatterer, which occupies a bounded subset with $C^{2}$-smooth boundary $\partial D$ such that the exterior $\mathbb{R}^{2} \backslash \bar{D}$ of $D$ is connected. In this paper $D$ maybe a domain or a curve, which represents an extended obstacle or a crack in acoustics.
The forward scattering problem is to find the scattered (perturbed) field $u^{sc}$ to the Helmholtz equation
\begin{equation}
    \Delta u^{sc} + k^{2}u^{sc} = 0 \hspace{.2 cm} \text{in} \hspace{.2 cm} \mathbb{R}^{2}\backslash \bar{D},
    \label{Helmholtz_eq}
\end{equation}
which satisfies the Dirichlet boundary condition
\begin{equation}
    u^{sc} =  -u^{in} \hspace{.2 cm} \text{on} \hspace{.2 cm} \partial D,
    \label{DiriCond_Helmholtz_eq}
\end{equation}
and the Sommerfeld radiation condition
\begin{equation}
    \lim_{r\rightarrow \infty} \sqrt{r} \left( \frac{\partial u^{sc}}{\partial r} - iku^{sc} \right) = 0, \hspace{.2 cm} r = |x|,
    \label{SommerfeldCond}
\end{equation}
uniformly in all directions $\hat{\mathbf{x}} = x/|x|\in \mathbb{S}:=\{x: |x|=1\}$, $x \in \mathbb{R}^{2} \backslash \bar{D}$\tcr{, and $i = \sqrt{-1}$ is the imaginary unit}. The total field $u$ is defined as $u = u^{in} + u^{sc}$ in $\mathbb{R}^{2}\backslash \bar{D}$. The Sommerfeld radiating solution $u^{sc}$ has an asymptotic behavior of the form
\begin{equation}
    u^{sc}(x) = \frac{ e^{\mathrm{i}k|x|} }{ \sqrt{|x|} } \left\{   u^{\infty}(\hat{\mathbf{x}}) + \mathcal{O}\left(\frac{ 1 }{ \sqrt{|x|} }\right)  \right\}, \hspace{.2 cm} |x| \rightarrow \infty,
    \label{asymptotic_behaviour}
\end{equation}
where $u^{\infty}(\hat{\mathbf{x}})$ is called the far-field pattern at the observation direction $\hat{\mathbf{x}}\in\mathbb{S}$. Note that $u^{\infty}: \mathbb{S}\rightarrow \mathbb{C}$ is an analytic function with phase information. The above model also appears in the TE polarization of time-harmonic electromagnetic scattering from infinitely long and perfectly conducting cylinders.

The uniqueness, stability and inversion algorithms for recovering $\partial D$ from phased far-field patterns have been extensively studied with one or many incoming plane and point source waves. We refer to the monographs \cite{CC2014, DColton_2013_InverseAcousticScattering, Akirsch_2008, Akirsch_2011_MathematicalTheoryIOP, GNakamura_RPotthast_2015_InverseModeling} for historical remarks, an overview of recent progresses and the comparison between different approaches. In many practical applications, the phase information of the far-field pattern cannot be measured accurately compared with its modulus or intensity. For instance, in optics it is not trivial to measure the phase of electromagnetic waves incited at high frequencies. One of the essential difficulties in using phaseless far-field data lies in the translation invariance property for plane wave incidence, which we state as follows. Let $u^\infty(\hat{\mathbf{x}}; D, \mathbf{d})$ be the far-field pattern corresponding to the incident plane wave $e^{ikx\cdot \mathbf{d}}$ ($\mathbf{d}\in \mathbb{S}$ is the incident direction) and the sound-soft obstacle $D$. For the shifted obstacle $D_z:=\{x+z: x\in D\}$, the corresponding far-field pattern is given by (see \cite{RKress_1997_ScatterCrackModulus})
\begin{equation}\label{translation}
    u^\infty(\hat{\mathbf{x}}; D_z, \mathbf{d})
    = e^{ik z\cdot(\mathbf{d}-\hat{\mathbf{x}})}u^\infty(\hat{\mathbf{x}}; D, \mathbf{d}) \quad\mbox{for all}\quad \hat{\mathbf{x}}\in\mathbb{S}.
\end{equation}
Hence, we get
\[
    |u^\infty(\hat{\mathbf{x}}; D_z, \mathbf{d})|
    = |u^\infty(\hat{\mathbf{x}}; D, \mathbf{d})|
    \quad\mbox{for all}     \quad
    \mathbf{d}, \hat{\mathbf{x}}\in\mathbb{S}.
\]
This implies that it is impossible to \tcr{recover} the location of $D$ from the phaseless far-field pattern of a plane wave.

There has been tremendous interest in inverse scattering with phaseless data or in phase retrieval problems in optics and other physical and engineering areas (see, e.g. \cite{HAmmari_2016_ScatteringPhaseless, GBao_2013_ScatteringPhaseless, FMMa_2018_ScatterCrackPhaseless, OIvanyshyn_2007_ShapeResconstruction, OIvanyshyn_2010_3DObstaclePhaseless, MVKlibanov_2014_3DScatterPhaseless, MVKlibanov_2017_3DScatterPhaseless, BZhang_2010_UniqueBallSingleFar, MHMaleki_1993_PhaseReconstruction} and the references therein). In a deterministic setting where randomness are not taken into account, Kress \& Rundell and Ivanyshyn \& Kress proposed a Newton-type iterative approach to reconstruct the shape of sound-soft obstacles from only the modulus of the far-field pattern in \cite{OIvanyshyn_2007_ShapeResconstruction, OIvanyshyn_2010_3DObstaclePhaseless, RKress_1997_ScatterCrackModulus}. The approach of \cite{OIvanyshyn_2007_ShapeResconstruction, OIvanyshyn_2010_3DObstaclePhaseless} was based on a pair of nonlinear and ill-posed integral equations motivated by an inverse boundary value problem for the Laplace equation with phase information \cite{IKress2006, RKress2005}; see also \cite{FMMa_2018_ScatterCrackPhaseless, OIvanyshyn_2008_InverseScatterCrackNonlinearIntegral, Kress1995_ScatterOpenArc} for inverse scattering from sound-soft cracks using a single far-field pattern with phase or phaseless information. Klibanov proved unique determination of a compactly supported potential of the stationary three-dimensional Schr$\ddot{\text{o}}$dinger equation from the phaseless near-field data incited by an interval of frequencies \cite{MVKlibanov_2014_3DScatterPhaseless}. This was later extended in \cite{MVKlibanov_2017_3DScatterPhaseless} to the reconstruction of a smooth wave speed in the three-dimensional Helmholtz equation. To broke the translation invariance property, it was recently prosed in \cite{BZhang_2018_UniquenessScatterPhaseless} that, phaseless far-field patterns generated by infinitely many sets of superpositions of two plane waves with different directions can be used to uniquely determine a penetrable or impenetrable scatterer; \tcr{see \cite{BZhang_HZhang_2018_FastImagingPhaselessFixedFrequency} for a fast imaging algorithm based on this idea.}
\tcr{Similar uniqueness results were derived by Zhang \& Guo \cite{DZhang_YGuo_2018_UniquePhaselessBall} where
the superposition of a fixed plane wave and some point sources was taken as incident waves and a reference ball technique was proposed.
Uniqueness and direct sampling algorithms using
the superposition of plane waves and fixed source location point sources were considered in \cite{XJi_XLiu_BZhang_2019_Phaseless_Unique_sampling}.} In this paper we propose to  generate the phaseless data using the following superposition of two plane waves (see \cite{BZhang_2018_UniquenessScatterPhaseless})
\begin{equation}
    u^{in}_{\ell}(x) := e^{ \mathrm{i} k x\cdot \mathbf{d}_{0} } + e^{\mathrm{i} k x\cdot \mathbf{d}_{\ell} }, \hspace{.2cm} \ell = 0,1, 2, \cdots, L, \label{incident_wave}
\end{equation}
and then to recover a sound-soft disk\tcr{, a line crack or a kite-shaped obstacle} through the Bayesian approach. In (\ref{incident_wave}), we fix $\mathbf{d}_{0} \in \mathbb{S}$ and change $\mathbf{d}_{\ell}\in\mathbb{S}$ as incident directions, due to the a priori information of the obstacle; see Theorem \ref{TH} for a uniqueness proof for sound-soft disks.

In recent years, the Bayesian method has received increasing attention for inverse problems \cite{BGFitzpatrick_1991_Bayesian, Stuart_2014_SubsurfaceFlow, JKaipio_ESomersalo_2006_StatisticalInverse, Stuart_2015_Data_Assimilation, Stuart_2010_Bayesian_perspective}, which also has been applied to the inverse scattering problems \cite{ABaussard_2001_BayesianScattringMicrowave, BThanh_GOmar_2014_BayesianScattering, IHarris_2017_NearField, JGSun_2019_ScatterBayesian, JGSun_2019_StekloffBayesian, YJWang_2015_BayesianScatteringInterior} with phase far-field data.
In particular, the authors of \cite{BThanh_GOmar_2014_BayesianScattering} adopt the Bayesian framework of \cite{Stuart_2010_Bayesian_perspective} to shape identification problems in inverse scattering and establish a framework for proving well-posedness of the Bayesian formulation using a suitable shape parametrization and the regularity of shape derivatives. The aim of this paper is to propose the Bayesian method using a single far-field pattern without phase information. The Bayesian method provides us a new perspective to view the inverse scattering problem in the form of statistical inferences. In this statistical approach, all parameters are random variables and the key issue is to estimate the posterior distribution of the unknown quantities based on the Bayes' formula \cite{Stuart_2010_Bayesian_perspective} and the known prior distribution.
\tcr{The Bayesian method could be an alternative method to overcome the challenges in deterministic inverse problems, although it usually leads to expensive computational cost.
The advantageous over
deterministic inversion schemes in inverse scattering (for example, optimization-based iterative schemes and non-iterative sampling methods) are summarized as follows.}
\tcr{(i) Instead of deterministic reconstructions, the Bayesian approach gives rise to statistic characteristics of the posterior distribution of unknown parameters and provides a quantification of the uncertainties arising from the corresponding model predictions.  (ii) The Bayesian method could lead to all possible solutions of the inverse problem. For example, using one plan wave rather than the superposition of two plane waves, the Bayesian method could reconstruct the shape of an unknown obstacle which is located at every possible position due to the translation invariance (\ref{translation}).
(iii) For inverse scattering problems, the theoretical analysis and numerical methods in the Bayesian framework are only based on the deterministic forward model. Hence, it is easy to perform theoretical analysis and numerical examples.
(iv) The Bayesian method needs less measurement data without phase information and does not require a good initial guess.
In this paper, the Markov chain Monte Carlo (MCMC) method \cite{Brooks_2011_MCMC, Gamerman_2006_MCMC, Geyer_1992_MCMC} is proposed to accomplish the characterization of the posterior distribution, while the preconditioned Crank-Nicolson (pCN) algorithm \cite{Stuart_2013_pCN} is adopted to improve the convergence rate in the iteration of MCMC method. Since the MCMC method and the pCN algorithm are adopt to calculate the numerical approximation, the numerical method in this paper is insensitive to the initial guess of the obstacle shape. 
In our numerical examples, we exhibit that accurate reconstructions can be achieved when the number of incident waves and observation directions is small. 
}

This paper is organized as follows. In section 2, we adapt the Bayesian framework to inverse scattering problems with phaseless data. In section 3, we exhibit numerical results for recovering a disk\tcr{, a line crack and a kite-shaped obstacle}. Conclusions are given in section 4.

\section{Bayesian Framework}
In this paper we want to recover an unknown sound-soft obstacle from phaseless far-field patterns corresponding to a set of superposition of two plane waves. We propose the Bayesian approach to solve this inverse scattering problem. First of all, we set a suitable parameterization of the position and the shape of an obstacle. Then, in the Bayesian framework, we estimate the posterior distribution of the unknown obstacle parameters. By the Bayes' theorem \cite{Stuart_2015_Data_Assimilation, Stuart_2010_Bayesian_perspective}, the posterior distribution of these parameters can be obtained from the prior distribution and the likelihood function to be specified in this section. Numerically, we will adopt the Markov chain Monte Carlo method (MCMC) to get an approximation of the posterior distribution.

\subsection{Parameterization of the obstacle}

Since the boundary of the underlying obstacle is a $C^{2}$-smooth curve, we can represent or approximate its geometrical shape by a finite set $\mathbf{Z}$ of variables
\begin{equation}
    \mathbf{Z}:= ( z_{1}, z_{2}, \cdots, z_{N} )^{\top} \in \mathbb{R}^{N}, N\in \mathbb{N}_0. \label{parameter_obstacle}
\end{equation}
 For example, we can use four parameters $\mathbf{Z}:= ( z_{1}, z_{2}, z_{3}, z_{4} )^{\top}$ to represent a line segment, where $(z_{1}, z_{2})^{\top}$  and  $(z_{3}, z_{4})^{\top}$ denote respectively the two ending points, or we can use $\mathbf{Z}:= ( a_{1}$, $b_{1}$, $a_{2}$, $b_{2}$, $\cdots$, $a_{N}$, $b_{N} )^{\top}$ to approximate a star-shaped closed curve where $\{(a_j, b_j): j=1,\cdots, N\}$ stand for the Fourier coefficients in the
truncated Fourier expansion.

Recalling the incident waves $u^{in}_{\ell}(x), \ell = 1, 2, \cdots, L$ in the form of a set of superpositions of two plane waves \eqref{incident_wave}, we express the  (phased) far-field patterns  of the scattering model \eqref{Helmholtz_eq}-\eqref{SommerfeldCond}
 by
\begin{equation}
    u^{\infty}(\hat{\mathbf{x}}; \mathbf{Z}, \mathbf{d}_{0}, \mathbf{d}_{\ell}, k), \hskip .2 cm \ell = 1, 2, \cdots, L, \hspace{.2 cm} \hat{\mathbf{x}} \in \mathbb{S}. \label{far_field_pattern}
\end{equation}
Correspondingly, the phaseless far-field pattern are denoted by
\begin{equation}
    | u^{\infty}(\hat{\mathbf{x}}; \mathbf{Z}, \mathbf{d}_{0}, \mathbf{d}_{\ell}, k) |, \hskip .2 cm \ell = 1, 2, \cdots, L, \hspace{.2 cm} \hat{\mathbf{x}} \in \mathbb{S}, \label{phaseless_far_field_pattern}
\end{equation}
where $|\cdot|$ is the modulus of a complex number.

\subsection{Prior distribution}
By \eqref{parameter_obstacle}, the prior distribution of the obstacle parameters $\mathbf{Z}$ depends on the distribution of $z_{n}, n=1, 2, \cdots, N$. Let $\{z_{n}\}_{n=1}^N$ be independent variables with the prior density $\pi_{pr}^{n}$ and prior measure $\mu_{pr}^{n}$. Then the prior density $\pi_{pr}$ and prior measure $\mu_{pr}$ of $\mathbf{Z}$ are respectively given by
\begin{eqnarray}
    \pi_{pr}(\mathbf{Z}) &=& \prod_{n=1}^{N} \pi_{pr}^{n} ( z_{n} ), \label{prior_density} \\
    \mu_{pr}(d \mathbf{Z}) &=& \prod_{n=1}^{N} \mu_{pr}^{n} ( d z_{n} ). \label{prior_measure}
\end{eqnarray}
In this paper we assume that $z_{n}$ are random variables with the Gaussian distribution, that is, $$\mu_{pr}^{n} = \mathcal{N}( m_{n}, \sigma_{n} ),\quad n=1, 2, \cdots, N.$$ For simplicity, \tcr{we assume that $\sigma_{1} = \cdots = \sigma_{N} = \sigma_{pr}$, implying that $\mu_{pr} = \mathcal{N}( \mathbf{m}_{pr}, \sigma_{pr} \mathbf{I} )$, where $\mathbf{m}_{pr} = (m_{1}, m_{2}, \cdots, m_{N})^{\top}$ and} $\mathbf{I}\in \mathbb{R}^{N\times N}$ is the identity matrix.

\subsection{Observation of far-field pattern}
To bridge the parameterization \eqref{parameter_obstacle} of the obstacle and the associated phaseless far-field data \eqref{phaseless_far_field_pattern}, we define an operator $F: \mathbb{R}^{N}\rightarrow \tcr{C^{\infty}(\mathbb{S})}$ as
\begin{equation}
    | u^{\infty}(\hat{\mathbf{x}}) | = F(\mathbf{Z}), \hspace{.2 cm}  \hat{\mathbf{x}} \in \mathbb{S},   \label{forward_model}
\end{equation}
which can be regarded an abstract map from the space of obstacle parameters to the space of observation data in the continuous sense. From the well-posedness of forward scattering, $F$ is continuous but highly non-linear.

Let $G = (g_{1}, g_{2}, \cdots, g_{M})^{\top}: \tcr{C^{\infty}(\mathbb{S})}\rightarrow \mathbb{R}^M$ be a bounded linear observation operator with $g_{m}: \tcr{C^{\infty}(\mathbb{S})}\rightarrow \mathbb{R}_+$ given by
\[
    g_{m}( |u^{\infty}( \hat{\mathbf{x}} )| )=|u^{\infty}( \hat{\mathbf{x}}_{m} ) |,\quad m=1, 2, \cdots, M,
\]
where $\{\hat{\mathbf{x}}_{m}\}_{m=1}^M \subset \mathbb{S}$ is the set of discrete observation directions. Then the observation at the observation direction $\hat{\mathbf{x}}_{m}$ can be rephrased as
\begin{equation}
    y_{m} = g_{m}( | u^{\infty}( \hat{\mathbf{x}} ) | ) + \eta_{m} = | u^{\infty}( \hat{\mathbf{x}}_{m} ) | + \eta_{m}, \label{observation_with_noise}
\end{equation}
where $\eta_{m}$ represents the noise polluting the observation data at the direction $\hat{\mathbf{x}}_{m}$.

Set $\mathbf{Y} = (y_{1}, y_{2}, \cdots, y_{M})^{\top} \in \mathbb{R}^{M}$ and $\eta = (\eta_{1}, \eta_{2}, \cdots, \eta_{M})^{\top} \in \mathbb{R}^{M}$. Denote by $\mathcal{G} = G\circ F $ the map from the obstacle parameter space $\mathbb{R}^{N}$ to observation space $\mathbb{R}^{M}$, that is,
\begin{equation}
    \mathbf{Y} = \mathcal{G}( \mathbf{Z} ) + \eta, \qquad  Y, \eta \in \mathbb{R}^{M}, \hskip .1 cm  \mathbf{Z} \in \mathbb{R}^{N}. \label{forward_observation}
\end{equation}
Our inverse problem in this paper is to determine the obstacle parameters $\mathbf{Z}\in \mathbb{R}^{N}$ from the observation data $\mathbf{Y}\in \mathbb{R}^{M}$ with the noise pollution $\eta\in \mathbb{R}^{M}$.

\subsection{Likelihood}

We assume the observation pollution $\eta$ is independent of $u^{\infty}$ and drawn from the Gaussian distribution $\mathcal{N}( \mathbf{0}, \Sigma_{\eta} )$ with the density $\rho$, where $\Sigma_{\eta}\in \mathbb{R}^{M\times M}$ is a self-adjoint positive matrix. By the observation of the phaseless data with noise \eqref{forward_observation}, we can get the relationship $\mathbf{Y} | Z \sim \mathcal{N}( \mathcal{G}( \mathbf{Z} ), \Sigma_{\eta} )$. Define the model-data misfit function $\Phi( \mathbf{Z}; \mathbf{Y} ) : \mathbb{R}^{N} \times \mathbb{R}^{M} \rightarrow \mathbb{R} $ as
\begin{equation}
    \Phi( \mathbf{Z}; \mathbf{Y} ) = \frac{1}{2} | \mathbf{Y} - \mathcal{G}( \mathbf{Z} ) |^{2}_{\Sigma_{\eta}}, \label{model_data_misfit}
\end{equation}
where $| \cdot |_{\Sigma_{\eta}}  = | \Sigma_{\eta}^{-\frac{1}{2}} \cdot | $.  Hence, the likelihood function is given by
$$
    \rho\big( \mathbf{Y} - \mathcal{G}( \mathbf{Z} ) \big) = \frac{1}{ \big( (2\pi)^{M}\; \mbox{det}( \Sigma_{\eta} ) \big)^{1/2} }\; e^{- \Phi( \mathbf{Z}; \mathbf{Y} ) }.
$$
Furthermore,  the posterior density $\pi_{post}$ and the posterior measure $\mu_{post}$ are connected to  the prior measure $\mu_{pr}$ through the Radon-Nikodym derivative \cite{Durrent_2010_Probability}, given by
\begin{equation}
   \frac{d \mu_{post} }{ d \mu_{pr} } ( \mathbf{Z} ) \propto e^{- \Phi( \mathbf{Z}; \mathbf{Y} ) }. \label{posterior_measure}
\end{equation}

\subsection{Well-posedness of Bayesian framework}

The well-posedness arguments of \cite{BThanh_GOmar_2014_BayesianScattering, Stuart_2010_Bayesian_perspective} can be applied to deal with our inverse scattering problem with the Bayesian approach. In our phaseless case, we are required to justify the following Assumption \ref{Assumption_direct}, relying on regularity properties of the forward operator $\mathcal{G}$.
\begin{assumption}\label{Assumption_direct}  The map $\mathcal{G}:\mathbb{R}^{N} \rightarrow \mathbb{R}^{M}$ satisfies
    \begin{itemize}
      \item [(i)]  For every $\varepsilon>0$, there is an $\hat{M} = \hat{M}(\varepsilon) \in \mathbb{R}$ such that, for all $\mathbf{Z}\in \mathbb{R}^{N}$,
            \begin{equation*}
                | \mathcal{G}( \mathbf{Z} )  |_{\Sigma_{\eta}} \leq e^{ \varepsilon \| \mathbf{Z} \|^{2}_{2} + \hat{M} },
            \end{equation*}
          where $\| \cdot \|_{2}$ is the Euclidean norm.
      \item [(ii)] For every $r>0$, there is a $K = K(r) > 0$ such that, for all $\mathbf{Z}_{1}, \mathbf{Z}_{2} \in \mathbb{R}^{N}$ with \\ $ \max\big\{  \| \mathbf{Z}_{1} \|_{2},  \| \mathbf{Z}_{2} \|_{2}  \big\} < r$, it holds that
            \begin{equation*}
                | \mathcal{G}( \mathbf{Z}_{1} ) - \mathcal{G}( \mathbf{Z}_{2} )  |_{\Sigma_{\eta}} \leq K \| \mathbf{Z}_{1} - \mathbf{Z}_{2} \|_{2}.
            \end{equation*}
    \end{itemize}
\end{assumption}
We remark that there is no essential difference in proving Assumption \ref{Assumption_direct} (i) between the phased and phaseless inverse scattering problems. The Assumption \ref{Assumption_direct} (ii) follows directly from the triangle inequality $\big| |a|-|b| \big|\leq |a-b|$ for complex numbers $a, b\in \mathbb{C}$ and the corresponding assumption for phased inverse scattering problems. Hence, when $D$ is sound-soft \tcr{scatterer},   Assumption \ref{Assumption_direct} can be proved following the phased arguments of \cite{BThanh_GOmar_2014_BayesianScattering}; see also \cite{JGSun_2019_ScatterBayesian, YJWang_2015_BayesianScatteringInterior} for the proofs in the case of limited aperture data and for interior scattering problems. If $D$ is sound-soft crack, the same results can be verified by applying the Fr$\acute{\text{e}}$chet differentiability with respect to the boundary of the far field operator (\cite{RKress_1995_FDiffFarFieldOperate, Kress1995_ScatterOpenArc}).  The above assumptions together with the choice of the Gaussian prior measure (which satisfies $\mu_{pr}( \mathbb{R}^{N} ) = 1$) lead to well-posedness of the Bayesian inverse problem, which is a result of application of Lemma 2.8, Theorem 4.1, Theorem 4.2 and Theorem 6.31 in \cite{Stuart_2010_Bayesian_perspective}. Before stating the well-posedness (see Theorem \ref{Well_Posterior} below), we recall the Hellinger distance defined by

\begin{equation}
    d_{\text {Hell}}\left(\mu_{1}, \mu_{2}\right) :=\sqrt{\frac{1}{2} \int\left(\sqrt{\frac{d \mu_{1}}{d \mu_{0}}}-\sqrt{\frac{d \mu_{2}}{d \mu_{0}}}\right)^{2} d \mu_{0}},
    \label{Hellinger_distance}
\end{equation}
where $\mu_{1}, \mu_{2}$ are two measures that are absolutely continuous with respect to $\mu_{0}$.

\begin{theorem}\label{Well_Posterior}
    If the operator $\mathcal{G}$ satisfies the Assumption \ref{Assumption_direct} and the prior measure $\mu_{pr}$ satisfies $\mu_{pr}( \mathbb{R}^{N} ) = 1$, then the posterior measure $\mu_{post}$ is a well-defined probability measure on $\mathbb{R}^{N}$ and absolutely continuous with respect to prior measure $\mu_{pr}$. What's more, the posterior measure $\mu_{post}$ is Lipschitz in the data $\mathbf{Y} $, with respect to the Hellinger distance: if $\mu_{post}^{1}$ and $ \mu_{post}^{2} $ are two posterior measures corresponding to data $\mathbf{Y}_{1}$ and $\mathbf{Y}_{2}$, then there exists $C = C(r) > 0$ such that,
    \begin{equation*}
         d_{\text{Hell}}(\mu_{post}^{1}, \mu_{post}^{2})
         \leq C \|\mathbf{Y}_{1} - \mathbf{Y}_{2}\|_{2},
    \end{equation*}
    for all $\mathbf{Y}_{1}$, $\mathbf{Y}_{2}$ with $ \max\big\{  \| \mathbf{Y}_{1} \|_{2},  \| \mathbf{Y}_{2} \|_{2}  \big\} < r$.
\end{theorem}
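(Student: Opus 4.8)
The plan is to reduce the statement to the abstract well-posedness theory of \cite{Stuart_2010_Bayesian_perspective} (Lemma 2.8 and Theorems 4.1, 4.2, 6.31) by verifying that the model--data misfit $\Phi$ of \eqref{model_data_misfit} has the four properties those results require: nonnegativity, local boundedness in $(\mathbf{Z},\mathbf{Y})$, local Lipschitz continuity in $\mathbf{Z}$, and local Lipschitz continuity in $\mathbf{Y}$ with a $\mu_{pr}$-integrable growth factor in $\mathbf{Z}$. Nonnegativity is immediate since $\Phi(\mathbf{Z};\mathbf{Y})=\frac12|\mathbf{Y}-\mathcal{G}(\mathbf{Z})|_{\Sigma_\eta}^2\ge 0$; in particular $e^{-\Phi}\le 1$, so the normalising constant $Z(\mathbf{Y}):=\int_{\mathbb{R}^N}e^{-\Phi(\mathbf{Z};\mathbf{Y})}\,\mu_{pr}(d\mathbf{Z})$ is finite because $\mu_{pr}(\mathbb{R}^N)=1$, while $Z(\mathbf{Y})>0$ follows from the local boundedness of $\Phi$ and the fact that the Gaussian $\mu_{pr}$ assigns positive mass to every ball of $\mathbb{R}^N$. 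This already gives that $\mu_{post}$ is a well-defined probability measure, absolutely continuous with respect to $\mu_{pr}$ through \eqref{posterior_measure}.

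For the regularity of $\Phi$ the workhorse is the elementary inequality $\big||a|^2-|b|^2\big|\le|a-b|\,(|a|+|b|)$, applied with $a=\Sigma_\eta^{-1/2}(\mathbf{Y}_1-\mathcal{G}(\mathbf{Z}))$ and $b=\Sigma_\eta^{-1/2}(\mathbf{Y}_2-\mathcal{G}(\mathbf{Z}))$, together with Assumption \ref{Assumption_direct}(i), which bounds $|\mathcal{G}(\mathbf{Z})|_{\Sigma_\eta}$ by $e^{\varepsilon\|\mathbf{Z}\|_2^2+\hat{M}}$ with $\varepsilon$ at our disposal. This yields, for $\max\{\|\mathbf{Y}_1\|_2,\|\mathbf{Y}_2\|_2\}<r$,
\begin{equation*}
  |\Phi(\mathbf{Z};\mathbf{Y}_1)-\Phi(\mathbf{Z};\mathbf{Y}_2)|\le C(r)\big(1+e^{\varepsilon\|\mathbf{Z}\|_2^2}\big)\,\|\mathbf{Y}_1-\mathbf{Y}_2\|_2 ,
\end{equation*}
and, by the same computation with $\mathbf{Y}_1=\mathbf{Y}_2$ bounded, the local bound $\Phi(\mathbf{Z};\mathbf{Y})\le C(r)\big(1+e^{2\varepsilon\|\mathbf{Z}\|_2^2}\big)$; the Lipschitz dependence on $\mathbf{Z}$ over $\max\{\|\mathbf{Z}_1\|_2,\|\mathbf{Z}_2\|_2\}<r$ is obtained identically, now invoking Assumption \ref{Assumption_direct}(ii) to control $|\mathcal{G}(\mathbf{Z}_1)-\mathcal{G}(\mathbf{Z}_2)|_{\Sigma_\eta}$, with a constant $K(r)$.

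The Hellinger estimate then follows from the standard two-term split. Writing $Z_i=Z(\mathbf{Y}_i)$ and adding and subtracting $Z_1^{-1/2}e^{-\frac12\Phi(\mathbf{Z};\mathbf{Y}_2)}$ in the integrand of \eqref{Hellinger_distance}, one obtains $2\,d_{\text{Hell}}(\mu_{post}^1,\mu_{post}^2)^2\le I_1+I_2$ with
\begin{equation*}
  I_1=\frac{2}{Z_1}\int\big(e^{-\frac12\Phi(\mathbf{Z};\mathbf{Y}_1)}-e^{-\frac12\Phi(\mathbf{Z};\mathbf{Y}_2)}\big)^2\mu_{pr}(d\mathbf{Z}),\qquad I_2=2\big|Z_1^{-1/2}-Z_2^{-1/2}\big|^2Z_2 .
\end{equation*}
For $I_1$ one uses that $t\mapsto e^{-t/2}$ is Lipschitz on $[0,\infty)$ with constant $\frac12$ and inserts the weighted Lipschitz-in-$\mathbf{Y}$ bound on $\Phi$; for $I_2$ one bounds $\big|Z_1^{-1/2}-Z_2^{-1/2}\big|\le\frac12\min(Z_1,Z_2)^{-3/2}|Z_1-Z_2|$ with $|Z_1-Z_2|\le\int\big|e^{-\Phi(\mathbf{Z};\mathbf{Y}_1)}-e^{-\Phi(\mathbf{Z};\mathbf{Y}_2)}\big|\,\mu_{pr}(d\mathbf{Z})$, again controlled by the same weighted estimate, together with the lower bound on $Z_i$ valid for $\|\mathbf{Y}_i\|_2<r$. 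Combining the two contributions and taking square roots delivers the claimed Lipschitz continuity.

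The one genuinely delicate point is the integrability bookkeeping in $I_1$ and in the bound on $|Z_1-Z_2|$: the weight there is $\big(1+e^{\varepsilon\|\mathbf{Z}\|_2^2}\big)^2$, so one needs $2\varepsilon<1/(2\sigma_{pr})$ for it to be $\mu_{pr}$-integrable, which is precisely why Assumption \ref{Assumption_direct}(i) is formulated for every $\varepsilon>0$ — we simply fix $\varepsilon$ small enough from the start. The remaining input, that Assumption \ref{Assumption_direct} itself holds for all the geometries considered, is the point already discussed before the theorem: for sound-soft domains it is the phased argument of \cite{BThanh_GOmar_2014_BayesianScattering} together with $\big||a|-|b|\big|\le|a-b|$, and for the crack it follows from the Fr\'echet differentiability of the far-field operator with respect to the boundary \cite{RKress_1995_FDiffFarFieldOperate, Kress1995_ScatterOpenArc}.
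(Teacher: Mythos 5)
Your proposal is correct and follows essentially the same route as the paper, which proves the theorem simply by invoking Lemma 2.8 and Theorems 4.1, 4.2 and 6.31 of \cite{Stuart_2010_Bayesian_perspective} once Assumption \ref{Assumption_direct} and $\mu_{pr}(\mathbb{R}^N)=1$ are in hand. You merely unpack those cited results (the bounds on $\Phi$, the normalising constants, and the two-term Hellinger split), and your bookkeeping --- including the choice $\varepsilon<1/(4\sigma_{pr})$ for Gaussian integrability of the weight --- is sound.
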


\subsection{Preconditioned Crank-Nicolson (pCN) algorithm with random proposal variance}
This subsection is devoted to the numerical approximation of the posterior distribution. We adopt the Markov chain Monte Carlo method (MCMC) \cite{Brooks_2011_MCMC, Gamerman_2006_MCMC, Geyer_1992_MCMC} to generate a large number of samples subject to the posterior distribution. The numerical approximation of the posterior distribution of unknown obstacle parameters can be obtained by statistic analysis on these samples. The Metropolis-Hastings \cite{Hastings_1970, Metropolis_1953} algorithm will be used to construct MCMC samples. To improve the convergence rate of the MCMC method, we apply the preconditioned Crank-Nicolson algorithm \cite{Stuart_2013_pCN}.

According to the pCN algorithm, the new obstacle parameter $\mathbf{X}$  can be iteratively updated by the old parameter (initial guess) $\mathbf{Z}$ through the formula
\begin{equation}
    \mathbf{X} = \tcr{\mathbf{m}_{pr} + } ( 1-\beta^2 )^{1/2} \tcr{(\mathbf{Z} - \mathbf{m}_{pr}) } + \beta \omega, \label{pCN}
\end{equation}
where $\beta \in [0, 1]$ is the proposal variance coefficient and $\omega \sim  \mathcal{N}( \mathbf{0}, \Sigma_{pcn} ) $ is a zero-mean normal random vector with covariance matrix $\Sigma_{pcn}\in \mathbb{R}^{N\times N}$. We remark that it is important and very tricky to select a suitable $\beta$, because the value of $\beta$ dominates the proposal variance in the pCN algorithm. If $\beta\ll 1$ is small, the parameter $\mathbf{Z}$ will be updated slightly in the MCMC sequence, leading to a time-consuming iteration process to get the ergodic in the space of obstacle parameters. If $\beta$ is big, the parameter $\mathbf{Z}$ may stay at one state for quite a long time with a huge number of iterations in the MCMC method. Consequently, one cannot get enough number of samples to approximate the posterior distribution, due to the computational cost prohibition. To over come this difficulty, we recommend the pCN algorithm with a random proposal variance \cite{Stuart_2013_pCN} to obtain good MCMC sequences (see below for the description).

\begin{algorithm}\label{pCN_Random_Proposal} \textbf{pCN Algorithm with Random Proposal Variance}
\begin{itemize}
  \item Initialize $\mathbf{Z}_{0} \in \mathbb{R}^{N}$ and $\beta_{0} \in [0, 1]$.
  \item Repeat
        \begin{enumerate}
          \item Draw new obstacle parameter $\mathbf{X}$ from the old state $\mathbf{Z}_{j}$ by the pCN algorithm \eqref{pCN} with the proposal variance coefficient $\beta_{j}$ as:
               \begin{equation}
                   \mathbf{X} = \tcr{\mathbf{m}_{pr} + ( 1-\beta_{j}^2 )^{1/2} (\mathbf{Z}_{j} - \mathbf{m}_{pr} ) }+ \beta_{j} \omega, \hspace{.3cm} \omega \sim  \mathcal{N}( \mathbf{0}, \Sigma_{pcn} ); \label{pCN_RPV}
               \end{equation}
          \item Compute Hasting ratio $\alpha ( \cdot, \cdot ):\mathbb{R}^{N}\times \mathbb{R}^{N}\rightarrow[1,\infty) $ as:
                \begin{equation}
                    \alpha \big( \mathbf{X}, \mathbf{Z}_{j} \big) = min\{ 1, e^{ \Phi( \mathbf{Z}_{j}; \mathbf{Y} ) - \Phi( \mathbf{X}; \mathbf{Y} )  } \}; \label{alpha_Z}
                \end{equation}
          \item Accept or reject $\mathbf{X}$: draw $U \sim \mathcal{U}(0,1)$ and then update $\mathbf{Z}_{j}$ by the criterion
                \begin{equation}
                    \mathbf{Z}_{j+1} =
                                \left\{
                                        \begin{array}{ll}
                                                \mathbf{X}, & \hbox{if\; $U \leq \alpha \big( \mathbf{X}, \mathbf{Z}_{j} \big) $,} \\
                                                \mathbf{Z}_{j}, & \hbox{if\, otherwise;}
                                        \end{array}
                                \right. \label{update_Z}
                \end{equation}
          \item Generate new proposal variance coefficient $\beta_{j+1}$ from $\beta_{j}$. First we set
                \begin{equation}
                   \beta_{new} = ( 1-\gamma^2 )^{1/2} \beta_{j} + \gamma ( \omega_{\beta} - 0.5 ), \hspace{.3cm} \omega_{\beta} \sim \mathcal{U}(0,1), \label{pCN_beta}
                \end{equation}
                with $\gamma \in [0, 1]$. In our case we choose $\gamma = 0.1$. Then $\beta_{j+1}$ can be updated by
                \begin{equation}
                    \beta_{j+1} =
                                \left\{
                                        \begin{array}{ll}
                                                \beta_{new}, & \mbox{if}\quad \beta_{new} \in [0, 1], \\
                                                - \beta_{new}, & \mbox{if}\quad \beta_{new} < 0, \\
                                                \beta_{new} - 1, & \mbox{if}\quad\beta_{new} > 1 .
                                        \end{array}
                                \right. \label{update_beta}
                \end{equation}
        \end{enumerate}

  \item \tcr{Select $\mathbf{Z}_{\hat{j}}, \  \hat{j} = J_{1} + (\tilde{j} - 1 ) J_{2}, \   \tilde{j} = 1, 2, \cdots, J_{3}$, $J_{1}, J_{2}, J_{3} \in \mathbb{N}_{0}$.}
\end{itemize}
\end{algorithm}

\tcr{In the Algorithm \ref{pCN_Random_Proposal}, the randomness of} the proposal variance has the potential advantage of including the possibility of large and small proposal variance coefficients $\beta$. The large proposal variance coefficient $\beta$ helps the pCN algorithm explore the state space efficiently, and the small proposal variance coefficient $\beta$ protects the iterations from dropping into a fixed state. Then the random proposal variance gives rise to ergodic Markov chains.

\tcr{The sequence $\mathbf{Z}_{\hat{j}}$ in the Algorithm \ref{pCN_Random_Proposal} is selected to approximate the posterior distribution. Here, $J_{1}$ is the number of initial states;
we take every $J_{2}$ sates to guarantee the selected sates are independent;  $J_{3}$ is the number of the total selected states.}

\section{Numerical Examples}
In this section we exhibit numerical examples to demonstrate the effectiveness of the method described in the previous section. To save computational costs, we consider \tcr{three types of acoustically sound-soft scatterers} in two dimensions:
\begin{itemize}
    \item Sound-soft disks with unknown centers and radii;
    \item Line cracks with unknown starting and ending points;
    \item \tcr{Kite-shaped obstacles with unknown position and shape.}
\end{itemize}
There are totally three unknown parameters for disks, four parameters for cracks \tcr{and six parameters for kite-shaped obstacle} in 2D, implying that \tcr{the unknown parameters always} lie in a finite space with low dimensions.

With the definition of the observation \eqref{forward_observation}, we construct two types of observations. \tcr{Since the observation noise $\eta$ in \eqref{forward_observation} is assumed to be a Gaussian random vector, the zero vector is a special sample of the observation noise. Then in the first type observations, we consider an ideal model where the phaseless far-field data corresponding to the exact obstacle are polluted by this special sample ($\eta = \mathbf{0}$) of observation noise. In the second type observations, we consider a practical model with noise-polluted far-field data, which will be used to discuss the robustness of the numerical method in practical applications. The Hausdorf difference between the reconstructed and  exact scatterers indicates the accuracy of our numerical method.}

\subsection{Disk}
In this subsection, we assume the underlying obstacle is a sound-soft disk. The inverse problem of recovering disks arises from, for instance, the polarization model of time-harmonic electromagnetic scattering from perfectly conducting cylinders whose cross-section is a disk. The parameterization of a disk is given by
\begin{equation}
    \mathbf{Z}:= ( z_{1}, z_{2}, z_{3} )^{\top} = ( x_{1}, x_{2}, \tcr{\log{r}} )^{\top}, \label{parameter_disk}
\end{equation}
where $( x_{1}, x_{2} )^\top $ is the center and $r$ is the radius of the disk. Since $r>0$, we assume that $r$ is a lognormal random variable\tcr{, i.e., $z_{3} = \log{r}$ is a Gaussian random variable}.

Let the incident wave be given by the sum of two plane waves of the form \eqref{incident_wave}. If the disk is located at the origin, it is well-known that the corresponding far-field pattern incited by the plane wave $u^{in}(x)=e^{ikx\cdot \mathbf{d}_{\ell}}$ is given by the convergent series
\begin{equation}
    u^{\infty}(\hat{\mathbf{x}}; \mathbf{Z}, \mathbf{d}_{\ell}, k)
    =  -e^{-i\frac{\pi}{4}}\sqrt{\frac{2}{\pi k}} \left[ \frac{J_{0}(kr)}{H_{0}^{(1)}(kr)} + 2\sum_{n=1}^{\infty}\frac{J_{n}(kr)}{H_{n}^{(1)}(kr)}\cos(n\theta_{\ell}) \right],\quad \ell=0,1,\cdots,L.
\end{equation}
Here, $\theta_{\ell}=\angle(\hat{\mathbf{x}}, \mathbf{d}_{\ell})$ denotes the angle between the observation direction $\hat{\mathbf{x}}$ and the incident direction $\mathbf{d}_{\ell}$, $J_{n}(\cdot)$ is the Bessel function of order $n$ and $H_{n}^{(1)}(\cdot)$ is the Hankel function of the first kind of order $n$. If the disk is located at $(x_1, x_2)^\top \in \mathbb{R}^2$, by the translational formula \eqref{translation} and the linear superposition principle, the exact far-field pattern with phase information of the scattered waves can be expressed as
\begin{equation}
\begin{split}
     u^{\infty}(\hat{\mathbf{x}}; \mathbf{Z}, \mathbf{d}_{0}, \mathbf{d}_{\ell}, k)
     = & -e^{-i\frac{\pi}{4}}\sqrt{\frac{2}{\pi k}} \left[ \frac{J_{0}(kr)}{H_{0}^{(1)}(kr)} + 2\sum_{n=1}^{\infty}\frac{J_{n}(kr)}{H_{n}^{(1)}(kr)}\cos(n\theta_{0}) \right] e^{\mathrm{i} k ( x_{1}, x_{2} )^{\top}\cdot ( \mathbf{d}_{0} - \hat{\mathbf{x}}) } \\
     & -e^{-i\frac{\pi}{4}}\sqrt{\frac{2}{\pi k}} \left[ \frac{J_{0}(kr)}{H_{0}^{(1)}(kr)} + 2\sum_{n=1}^{\infty}\frac{J_{n}(kr)}{H_{n}^{(1)}(kr)}\cos(n\theta_{\ell}) \right] e^{\mathrm{i} k ( x_{1}, x_{2} )^{\top}\cdot ( \mathbf{d}_{\ell} - \hat{\mathbf{x}}) }, \\ & \text{\ \ \ \ \ \ \ \ \ \ \ \ } \ell = 1, 2, \cdots, L, \hspace{.2 cm} \hat{\mathbf{x}} \in \mathbb{S}. \label{infinity_scattering_wave_disk}
\end{split}
\end{equation}
Note that the first line on the right hand side of (\ref{infinity_scattering_wave_disk}) denotes the far-field pattern corresponding to the incoming plane wave $e^{ikx\cdot \mathbf{d}_{0}}$, while the second line corresponding to $e^{ikx\cdot \mathbf{d}_{\ell}}$.

The following theorem states that our phaseless data set is sufficient to uniquely identify a sound-soft disk.
\begin{theorem}\label{TH}
    Let $k>0$ and $\mathbf{d}_{0}\in \mathbb{S}$ be fixed. Then the data $\{|u^{\infty}(\hat{\mathbf{x}}; \mathbf{Z}, \mathbf{d}_{0}, \mathbf{d}, k)|: \mathbf{d}\in \mathbb{S}\}$ uniquely determine a sound-soft disk (that is, the center and  radius of a disk).
\end{theorem}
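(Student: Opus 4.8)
The plan is to exploit the explicit series representation \eqref{infinity_scattering_wave_disk} together with the known phaseless data to pin down first the radius and then the center. Fix $k>0$ and $\mathbf{d}_0\in\mathbb{S}$. Suppose two sound-soft disks, with centers $c=(x_1,x_2)^\top$, $c'=(x_1',x_2')^\top$ and radii $r$, $r'$, produce the same phaseless far-field data for every incident direction $\mathbf{d}\in\mathbb{S}$. Writing $a_0(r):=J_0(kr)/H_0^{(1)}(kr)$ and $a_n(r):=2J_n(kr)/H_n^{(1)}(kr)$, the first line of \eqref{infinity_scattering_wave_disk} (the part coming from the fixed wave $e^{ikx\cdot\mathbf{d}_0}$) is a factor $e^{ikc\cdot(\mathbf{d}_0-\hat{\mathbf{x}})}$ times the fixed function $\phi_0(\hat{\mathbf{x}}):=-e^{-i\pi/4}\sqrt{2/(\pi k)}\sum_{n\ge 0} a_n(r)\cos(n\theta_0)$, and similarly for the $\mathbf{d}_\ell$-line.

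First I would recover the radius. A natural device is to let the variable incident direction $\mathbf{d}$ coincide with the fixed one, $\mathbf{d}=\mathbf{d}_0$; then \eqref{infinity_scattering_wave_disk} collapses to $2\,e^{ikc\cdot(\mathbf{d}_0-\hat{\mathbf{x}})}\phi_0(\hat{\mathbf{x}})$, whose modulus is $2|\phi_0(\hat{\mathbf{x}})|$ and is independent of the center. Equality of the phaseless data then forces $|\phi_0(\hat{\mathbf{x}})|=|\phi_0'(\hat{\mathbf{x}})|$ for all $\hat{\mathbf{x}}\in\mathbb{S}$, i.e. the moduli of the single-disk, centered far-field patterns agree. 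Since $\phi_0$ is a real-analytic function of $\theta_0=\angle(\hat{\mathbf{x}},\mathbf{d}_0)$ with Fourier-cosine coefficients $a_n(r)$, and the coefficients $a_n(r)=2J_n(kr)/H_n^{(1)}(kr)$ are nonzero and have known asymptotics/monotonicity in $r$, one invokes the classical fact (Colton--Kress, or \cite{RKress_1997_ScatterCrackModulus}) that the modulus of the far-field pattern of a centered sound-soft disk determines its radius; equivalently, I would argue directly that $|\phi_0|=|\phi_0'|$ on $\mathbb{S}$ together with analyticity forces the two patterns to differ only by a unimodular constant, and then matching (say) the leading behavior or the total scattering cross section $\int_{\mathbb S}|\phi_0|^2$, which is strictly monotone in $r$, yields $r=r'$.

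With $r=r'$ in hand, $\phi_0=\phi_0'$ and $\phi_\ell=\phi_\ell'$ as functions, so the full far-field patterns of the two configurations differ only through the center-dependent exponentials. The remaining task is to extract $c$ from the data $|u^\infty(\hat{\mathbf{x}};\mathbf Z,\mathbf d_0,\mathbf d,k)|$. Squaring, $|u^\infty|^2=|\phi_0|^2+|\phi_\ell|^2+2\,\mathrm{Re}\big(\phi_0\overline{\phi_\ell}\,e^{ikc\cdot(\mathbf d_0-\mathbf d)}\big)$, and since $r=r'$ the first two terms are common to both disks; hence $\mathrm{Re}\big((\phi_0\overline{\phi_\ell})(\hat{\mathbf x})\,e^{ikc\cdot(\mathbf d_0-\mathbf d)}\big)=\mathrm{Re}\big((\phi_0\overline{\phi_\ell})(\hat{\mathbf x})\,e^{ikc'\cdot(\mathbf d_0-\mathbf d)}\big)$ for all $\hat{\mathbf x}$ and all $\mathbf d$. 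Fixing a direction $\hat{\mathbf x}$ at which $\phi_0\overline{\phi_\ell}\neq 0$ (possible since these are nontrivial analytic functions), and varying $\mathbf d$ over $\mathbb S$ so that $\mathbf d_0-\mathbf d$ sweeps an arc, the two sinusoids of the "spatial frequency" vectors $kc$ and $kc'$ agree for a continuum of arguments; a standard analyticity/linear-independence argument then gives $c=c'$. I would make this precise by choosing two independent directions $\mathbf d$ to read off both components of $k(c-c')$ modulo $2\pi$, and then using that $\mathbf d_0-\mathbf d$ can be made arbitrarily small to remove the ambiguity.

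The main obstacle I anticipate is the radius step: showing rigorously that the \emph{phaseless}, center-free pattern $|\phi_0|$ on $\mathbb S$ determines $r$. The subtlety is that the unknown overall phase lost in taking moduli must be shown to be a genuine constant (this uses real-analyticity of $\phi_0$ in $\theta_0$ and the fact that a meromorphic/analytic function and its complex conjugate having equal moduli on a curve forces a unimodular ratio), after which one needs a strictly monotone scalar functional of $r$ — the scattering cross-section $\int_{\mathbb S}|\phi_0(\hat{\mathbf x})|^2\,ds$, or the first nonvanishing Fourier coefficient modulus $|a_0(r)|$ — to conclude. Everything after that (the center recovery) is comparatively routine, reducing to uniqueness for a trigonometric/analytic identity in the incident direction.
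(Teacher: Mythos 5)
Your overall strategy---recover the radius first from the choice $\mathbf{d}=\mathbf{d}_0$, then the center from the cross term---is a genuinely different route from the paper's, and your center step is essentially sound in spirit (the cancellation of $\hat{\mathbf{x}}$ in the cross-term exponential is the right observation, though you still owe the reader a treatment of the second branch of $\cos(\psi+A)=\cos(\psi+B)$, namely $A+B+2\psi\in 2\pi\mathbb{Z}$, which your ``standard analyticity/linear-independence argument'' glosses over). The genuine gap is exactly where you suspect it: the radius step. You need that $|\phi_0(\hat{\mathbf{x}})|=|\phi_0'(\hat{\mathbf{x}})|$ for all $\hat{\mathbf{x}}\in\mathbb{S}$ forces $r=r'$, and neither of your two proposed devices delivers this. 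First, equality of moduli of two analytic functions on the unit circle does \emph{not} imply that their ratio is a unimodular constant (compare $e^{i\theta}$ and $e^{-i\theta}$); the maximum-modulus argument you are implicitly invoking requires the ratio to extend to a bounded analytic function on a domain bounded by the curve, which is not available here. Second, the strict monotonicity in $r$ of the cross section $\int_{\mathbb{S}}|\phi_0|^2\,ds=\tfrac{2}{k}\bigl(2|a_0|^2+\sum_{n\ge1}|a_n|^2\bigr)$ with $|a_n|^2\propto J_n^2(kr)/(J_n^2(kr)+Y_n^2(kr))$ is asserted, not proved; the individual terms oscillate in $r$, monotonicity of the sum is not classical, and the paper's own Figure of $|u^\infty(\hat{\mathbf{x}}_1)|$ versus $r$ shows how delicate pointwise monotonicity already is. The reference you lean on, \cite{BZhang_2010_UniqueBallSingleFar}, only gives uniqueness for sufficiently small radius at fixed $k$, which is weaker than what Theorem \ref{TH} claims.

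The paper sidesteps this entirely. From (\ref{eq1}) with $\mathbf{d}=\mathbf{d}_0$ it gets equality of the single-plane-wave phaseless patterns for the one direction $\mathbf{d}_0$, translates both disks to the origin via (\ref{translation}), and then uses rotational invariance of a centered disk ($u^{\infty}(\hat{\mathbf{x}};\mathbf{Z}_0^{(j)},Q\mathbf{d}_0,k)=u^{\infty}(Q\hat{\mathbf{x}};\mathbf{Z}_0^{(j)},\mathbf{d}_0,k)$) to upgrade this to equality of phaseless plane-wave data for \emph{all} incident directions, i.e.\ (\ref{eq2}). The pair (\ref{eq1})--(\ref{eq2}) is precisely the hypothesis of \cite[Theorem 2.2]{BZhang_2018_UniquenessScatterPhaseless}, which yields $D_1=D_2$ (radius and center simultaneously) with no series manipulation at all. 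If you want to keep your constructive approach, you must either prove the missing lemma that the full phaseless pattern of a centered disk determines its radius for arbitrary $r>0$, or borrow the paper's symmetry trick to reduce to a citable uniqueness theorem.
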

\begin{proof}
Suppose that $D_j:=\{z=(z_1,z_2)^\top \in \mathbb{R}^2: \| z-x^{(j)} \|_{2}  < r_j\}$ ($j=1,2$) are two sound-soft disks centered at $x^{(j)}=(x^{(j)}_1,x^{(j)}_2)^\top \in \mathbb{R}^2$ with the radius $r_j>0$ ($j=1,2$). Set $\mathbf{Z}^{(j)}=(x^{(j)}_1, x^{(j)}_2, r_j)^\top$ and denote by $u^{\infty}(\hat{\mathbf{x}}; \mathbf{Z}^{(j)}, \mathbf{d}_{0}, \mathbf{d}, k)$ the far-field data corresponding to $D_j$ and the incident wave (\ref{incident_wave}). Suppose that the phaseless far-field pattern are identical, i.e.,
\begin{equation}\label{eq1}
    |u^{\infty}(\hat{\mathbf{x}}; \mathbf{Z}^{(1)}, \mathbf{d}_{0}, \mathbf{d}, k)|=|u^{\infty}(\hat{\mathbf{x}}; \mathbf{Z}^{(2)}, \mathbf{d}_{0}, \mathbf{d}, k)| \qquad\mbox{for all}\quad
    \mathbf{d}, \hat{\mathbf{x}}\in \mathbb{S}.
\end{equation}
In particular, choosing $\mathbf{d}=\mathbf{d}_0$ in the previous relation yields
\[
    |u^{\infty}(\hat{\mathbf{x}}; \mathbf{Z}^{(1)}, \mathbf{d}_{0}, \mathbf{d}_{0}, k)|=|u^{\infty}(\hat{\mathbf{x}}; \mathbf{Z}^{(2)}, \mathbf{d}_{0}, \mathbf{d}_0, k)|=2|u^{\infty}(\hat{\mathbf{x}}; \mathbf{Z}^{(j)}, \mathbf{d}_{0}, k)|,\quad j=1,2,
\]
for all $\hat{\mathbf{x}}\in \mathbb{S}$, where $u^{\infty}(\hat{\mathbf{x}}; \mathbf{Z}^{(j)}, \mathbf{d}_{0}, k) $ stands for the far-field pattern corresponding to the plane wave $e^{ikx\cdot \mathbf{d}_{0}}$ incident onto $D_j$. This implies that
\[
    |u^{\infty}(\hat{\mathbf{x}}; \mathbf{Z}^{(1)}, \mathbf{d}_{0}, k)|=|u^{\infty}(\hat{\mathbf{x}}; \mathbf{Z}^{(2)}, \mathbf{d}_0, k)|
    \qquad\mbox{for all}\quad
    \hat{\mathbf{x}}\in \mathbb{S}.
\]
Next, we shift the center of the disk $D_j$ to the origin and set $\mathbf{Z}^{(j)}_0:=(0,0, r_j)^\top$. Recalling the translational formula \eqref{translation}, we obtain
\[
    |u^{\infty}(\hat{\mathbf{x}}; \mathbf{Z}_0^{(1)}, \mathbf{d}_{0}, k)|=|u^{\infty}(\hat{\mathbf{x}}; \mathbf{Z}_0^{(2)}, \mathbf{d}_0, k)|
    \qquad\mbox{for all}\quad
    \hat{\mathbf{x}}\in \mathbb{S}.
\]
Since the shifted disks with the parameters $\mathbf{Z}_0^{(j)}$ are rotationally invariant, the far-field pattern $u^{\infty}(\hat{\mathbf{x}}; \mathbf{Z}_0^{(j)}, \mathbf{d}, k)$ only depends on the angle between the incident direction $\mathbf{d}$ and the observation direction $\hat{\mathbf{x}}$. For any $\mathbf{d}\in\mathbb{S}$, there exist a orthogonal matrix $Q$ such that $\mathbf{d}=Q\mathbf{d}_0$. It then follows that
(see e.g., \cite[Chapter 5.1]{DColton_2013_InverseAcousticScattering})
\[
    u^{\infty}(\hat{\mathbf{x}}; \mathbf{Z}_0^{(j)}, \mathbf{d}, k)=
    u^{\infty}(\hat{\mathbf{x}}; \mathbf{Z}_0^{(j)}, Q\mathbf{d}_0, k)=
    u^{\infty}(Q\hat{\mathbf{x}}; \mathbf{Z}_0^{(j)}, \mathbf{d}_0, k), \quad\forall \;  \hat{\mathbf{x}}\in\mathbb{S}.
\]
Combining the previous two identities yields
\[
    |u^{\infty}(\hat{\mathbf{x}}; \mathbf{Z}_0^{(1)}, \mathbf{d}, k)|=|u^{\infty}(\hat{\mathbf{x}}; \mathbf{Z}_0^{(2)}, \mathbf{d}, k)|
    \qquad\mbox{for all}\quad
    \hat{\mathbf{x}}, \mathbf{d}\in \mathbb{S},
\]
which together with the translational formula implies
\begin{equation}\label{eq2}
    |u^{\infty}(\hat{\mathbf{x}}; \mathbf{Z}^{(1)}, \mathbf{d}, k)|=|u^{\infty}(\hat{\mathbf{x}}; \mathbf{Z}^{(2)}, \mathbf{d}, k)|
    \qquad\mbox{for all}\quad
    \hat{\mathbf{x}}, \mathbf{d}\in \mathbb{S}.
\end{equation}
As a consequence of \cite[Theorem 2.2]{BZhang_2018_UniquenessScatterPhaseless}, the relations (\ref{eq1}) and (\ref{eq2}) lead to the coincidence of $D_1$ and $D_2$, which proves Theorem \ref{TH}.
\end{proof}

To apply the pCN algorithm \eqref{pCN} or \eqref{pCN_RPV}, we need to set the key parameters $\beta$ and $\Sigma_{pcn}$. For sound-soft disks, we set $\beta_{j} = \beta = 0.1$ and let $\Sigma_{pcn} = \mathbf{I}$ be the $N$-by-$N$ identity matrix. Then the proposal is given by
\begin{equation}
    \mathbf{X} = \tcr{\mathbf{m}_{pr} + } \sqrt{0.99} \tcr{(\mathbf{Z}_{j} - \mathbf{m}_{pr})} + 0.1 \omega, \hspace{.3cm} \omega \sim  \mathcal{N}( \mathbf{0}, \mathbf{I} ). \label{pCN_disk}
\end{equation}

We describe the settings of our computational performance as follows:
\begin{itemize}
    \item Unless otherwise specified, the wave number is always taken as $k=1$;

    \item The incident directions are
        \begin{equation}
            \tcr{
            \mathbf{d}_{\ell}
            =(\cos\theta_{\ell}, \sin\theta_{\ell}),
            \quad
            \theta_{\ell} = -\frac{\pi}{2} + \frac{2\pi \ell}{L+1},
            \quad
            \ell=0,1,\cdots,L;
            \label{incident_disk}
            }
        \end{equation}

    \item The observation directions are
        \begin{equation}
            \tcr{
            \hat{\mathbf{x}}_{m}
            = \big(\cos\theta_{m}, \sin\theta_{m} \big),
            \quad
            \theta_{m} = -\frac{\pi}{2} + \frac{2\pi m}{M},
            \quad
            m=1,2,\cdots,M;
            \label{observation_disk}
            }
        \end{equation}

    \item To compute the far-field pattern \eqref{infinity_scattering_wave_disk}, we truncate the infinite series of (\ref{infinity_scattering_wave_disk}) by using the Bessel and first-kind Hankel functions of order $n=0,1,2,\cdots, 100$;

    \item \tcr{In the Algorithm \ref{pCN_Random_Proposal}, we choose $J_{1}=9000$, $J_{2}=5$, $J_{3}=201$;}

    \item In the setting of the prior distribution $\pi_{pr}$, we assume $\sigma_{pr} = 1$;

    \item For the observation $\mathbf{Y}_{\ell}$ corresponding to the incident wave $u^{in}_{\ell}(x)$ in \eqref{incident_wave}, $\ell = 1, 2, \cdots, L $, we assume the observation pollution $\eta^{\ell} = ( \eta^{\ell}_{1}, \eta^{\ell}_{2}, \cdots,  \eta^{\ell}_{M} )^{\top}$ is a M-dimensional Gaussian variable, given by
      \begin{equation}
            \eta^{\ell}_{m} = \sigma_{\eta} \times \left| u^{\infty}(\hat{\mathbf{x}}_m; \mathbf{Z}, \mathbf{d}_{0}, \mathbf{d}_{\ell}, k) \right| \,\omega^{\ell}_{m}, \label{observation_noise}
      \end{equation}
    where $\omega^{\ell}_{m} \sim \mathcal{N}(0, 1)$, $m = 1, 2, \cdots, M$ and $\sigma_{\eta}$ is the noise coefficient. In our numerical tests we choose $ \sigma_{\eta} = 3\%, 6\%, 9\% $. In other words, we take $\eta^{\ell} \sim \mathcal{N}( \mathbf{0}, \mathbf{\Sigma_{\eta^{\ell}}} )$ and the diagonal matrix $\mathbf{\Sigma_{\eta^{\ell}}} = \mbox{diag}( \sigma^{\ell}_{1},  \sigma^{\ell}_{2}, \cdots, \sigma^{\ell}_{M} )$ with $\sigma^{\ell}_{m} = \big( \sigma_{\eta}\times \left| u^{\infty}(\hat{\mathbf{x}}_m; \mathbf{Z}, \mathbf{d}_{0}, \mathbf{d}_{\ell}, k)\right| \big)^{2}, m = 1, 2, \cdots, M$;

    \item Unless otherwise specified, the accurate obstacle parameters are set as $\hat{\mathbf{Z}} = (\hat{x}_{1}, \hat{x}_{2}, \hat{r})^{\top} = (1, 0.25, 0.12)^{\top}$, that is, a disk centered at $(1, 0.25)^{\top}$ with the radius $0.12$;

    \item \tcr{We assume the initial guess is a disk centered at origin $(0, 0)^{\top}$ with the radius $0.05$. Then the mean of the prior distribution is $\mathbf{m}_{pr} = (0, 0, \log{0.05})^{\top}$.}

\end{itemize}

In the first part, we adopt the ideal setting. Noting that in the formula \eqref{observation_noise} $\omega^{\ell}_{m} \sim \mathcal{N}(0, 1)$, we can gain a special sample of the observation noise with $\omega^{\ell}_{m} = 0$ and $\sigma_{\eta} = 3\%$\tcr{, $\ell = 1, 2, \cdots, L$, $m = 1, 2, \cdots, M$}. This would help us to investigate the accuracy of the numerical method and to verify the above uniqueness result in Theorem \ref{TH}.

At first, we discuss the accuracy of the numerical solutions for different choice of $L$ and $M$. Recall that the parameter $L$ denotes the number of incident waves and $M$ the number of observation directions. In Table \ref{z_r_L_M_table}, we show the mean, standard deviation and the relative error of the reconstructed parameters with different choice of $L, M$.  The numerical solutions of the recovered centers and radii  are shown in Figures \ref{z_L_M} and \ref{r_L_M}. The histograms of numerical solutions of the  centers and radii are shown in Figures \ref{hist_x_L_M},  \ref{hist_y_L_M} and \ref{hist_r_L_M}, respectively.

Based on these results, we find that the reconstructed parameters are getting more accurate as the number of incident or observation directions become larger. The numerical solutions with  $(L, M) = (32, 32)$, $(32, 16)$, $(16, 32)$, $(16, 64)$, $(8, 64)$ are relatively inaccurate, since the resulted relative errors are \tcr{larger} than $5\%$ in these cases. In contrast, the relative error is less $1\%$ if we choose $L$ and $M$ large enough such as  $(L, M) = (64, 128)$, $(64, 64)$, $(32, 128)$. On the other hand, it can be observed from Table \ref{z_r_L_M_table} that the standard deviation decreases as $L$ or $M$ increases. Table \ref{z_r_L_M_table_worse} and  Figure \ref{z_r_vs_L_M_worse} illustrate that small $L$ and $M$ may lead to unreliable reconstructions.

\begin{table}[htbp]
    \centering
    \caption{Mean, standard deviation and relative error of $(x_{1}, x_{2}), r$ vs $L$ and $M$.}
    \label{z_r_L_M_table}
    \begin{tabular}{cc|c|c|c}
    \hline
    \  $L$ & $M$ & mean & standard deviation & relative error \\
    \hline
     32 & 128 & (0.9976, 0.2494), 0.1199 & (0.0402, 0.0173), 0.0026 & \tcr{$(0.24\%, 0.24\%), \ 0.07\%$} \\
    \hline
    32 & 32 & (0.9192, 0.2804), 0.1203 & (0.1893, 0.0847), 0.0100 & \tcr{$(8.08\%, 12.15\%), \ 0.28\%$} \\
    \hline
    32 & 16 & (0.9222, 0.2833), 0.1209 & (0.1997, 0.0808), 0.0118 & \tcr{$(7.78\%, 13.30\%), \ 0.77\%$} \\
    \hline
    64 & 128 & (0.9941, 0.2514), 0.1199 & (0.0278, 0.0144), 0.0016 & \tcr{$(0.59\%, 0.56\%), \ 0.10\%$} \\
    \hline
    32 & 64 & (0.9934, 0.2542), 0.1200 & (0.0606, 0.0287), 0.0034 & \tcr{$(0.66\%, 1.67\%), \ 0.03\%$} \\
    \hline
     16 & 32 & (0.9023, 0.2794), 0.1201 & (0.2207, 0.0841), 0.0112 & \tcr{$(9.77\%, 11.74\%), \ 0.11\%$} \\
    \hline
     64 & 64 & (0.9934, 0.2521), 0.1198 & (0.0399, 0.0204), 0.0025 & \tcr{$(0.66\%, 0.83\%), \ 0.18\%$} \\
    \hline
    16 & 64 & (0.9290, 0.2660), 0.1196 & (0.1857, 0.0763), 0.0104 & \tcr{$(7.10\%, 6.41\%), \ 0.36\%$} \\
    \hline
    8 & 64 & (0.9127, 0.2661), 0.1187 & (0.1830, 0.0828), 0.0115 & \tcr{$(8.73\%, 6.43\%), \ 1.07\%$} \\
    \hline
    \end{tabular}
\end{table}

\begin{table}[htbp]
    \centering
    \caption{Mean and relative error of $(x_{1}, x_{2}), r$ vs small $L$ and $M$.}
    \label{z_r_L_M_table_worse}
    \begin{tabular}{cc|c|c}
    \hline
    \  $L$ & $M$ & mean & relative error \\
    \hline
    4 & 2 & (3.0494, 0.9982), 0.2643  & \tcr{(2.05\%, 2.99\%), \ 1.20\%} \\
    \hline
    4 & 8 & (3.0457, 0.9949), 0.2717 &  \tcr{(2.05\%, 2.98\%), \ 1.26\%} \\
    \hline
    8 & 4 & (2.9069, 1.1143), 0.2732  & \tcr{(1.91\%, 3.46\%), \ 1.28\%} \\
    \hline
    \end{tabular}
\end{table}

\begin{figure}[htbp]
  \centering
  \includegraphics[width = 4in]{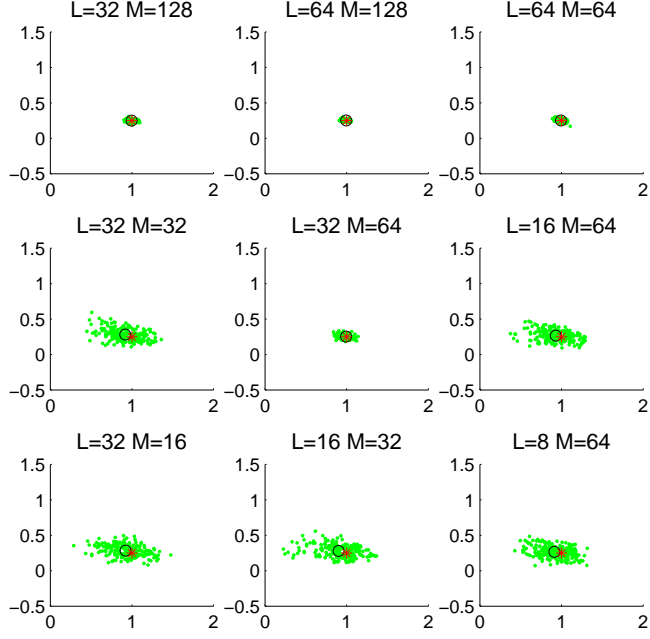}
  \caption{Reconstructions of the center of a sound-soft disk with different $L$ and $M$ at the idea setting. The red star $\textcolor{red}{\ast}$ denotes the accurate center, the green dots $\textcolor{green}{\cdot}$ are the numerical centers and the black $\circ$ is the mean of the numerical centers.}
  \label{z_L_M}
\end{figure}

\begin{figure}[htbp]
  \centering
  \includegraphics[width = 4in]{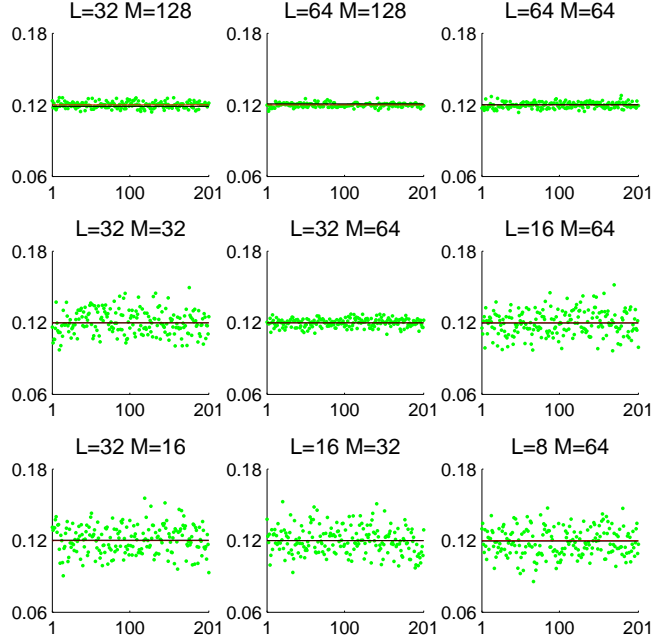}
  \caption{Reconstructions of the radius of a sound-soft disk with different $L$ and $M$ at the idea setting. The red line is the accurate radius, the black line is the mean of the numerical radii and the green dots $\textcolor{green}{\cdot}$ are the numerical radii.}
  \label{r_L_M}
\end{figure}

\begin{figure}[htbp]
  \centering
  \includegraphics[width = 4in]{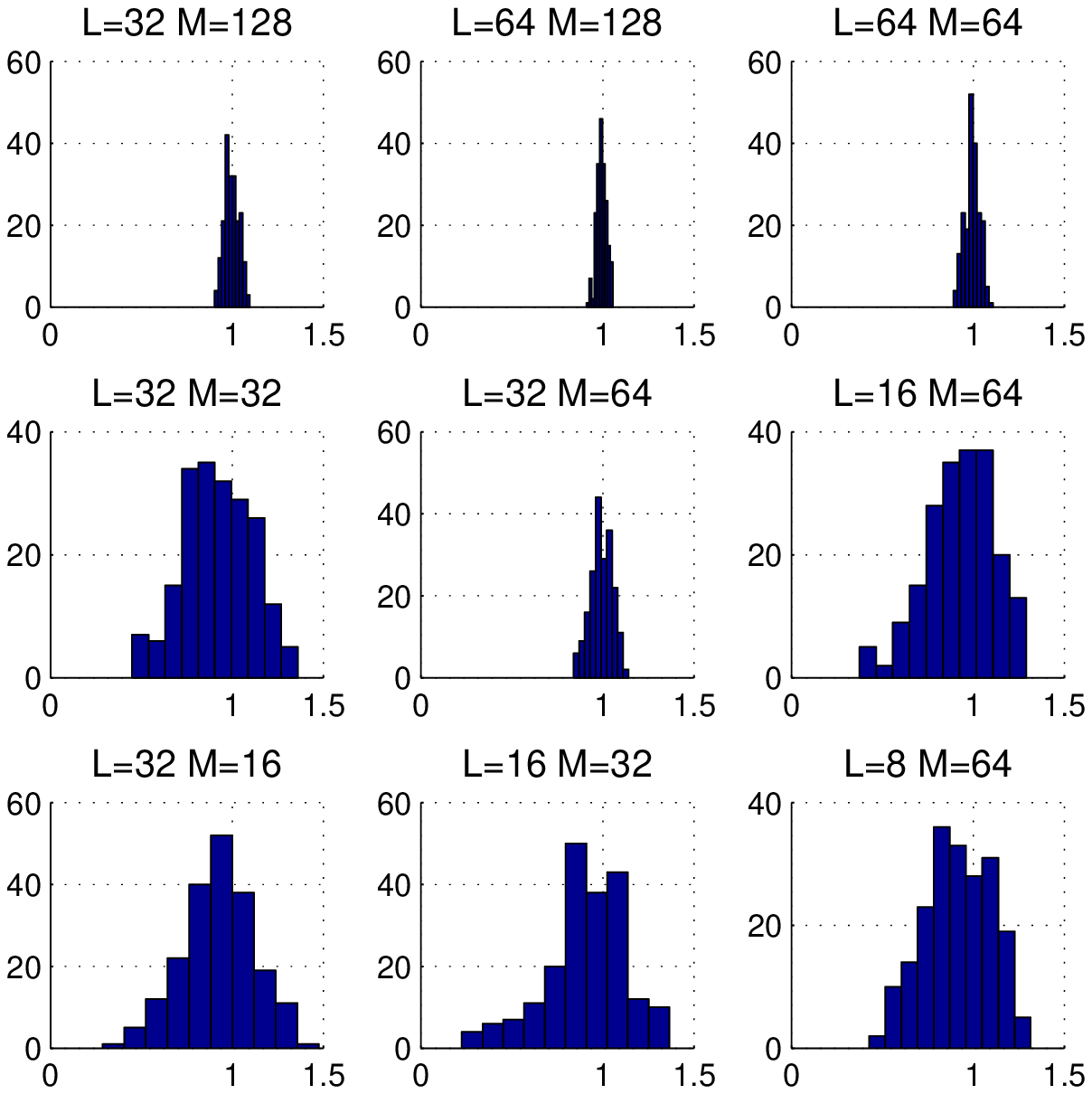}
  \caption{ Histograms of 201 reconstruction results of the $x_1$-component of the center with different $L$ and $M$ at the idea setting.}
  \label{hist_y_L_M}
\end{figure}

\begin{figure}[htbp]
  \centering
  \includegraphics[width = 4in]{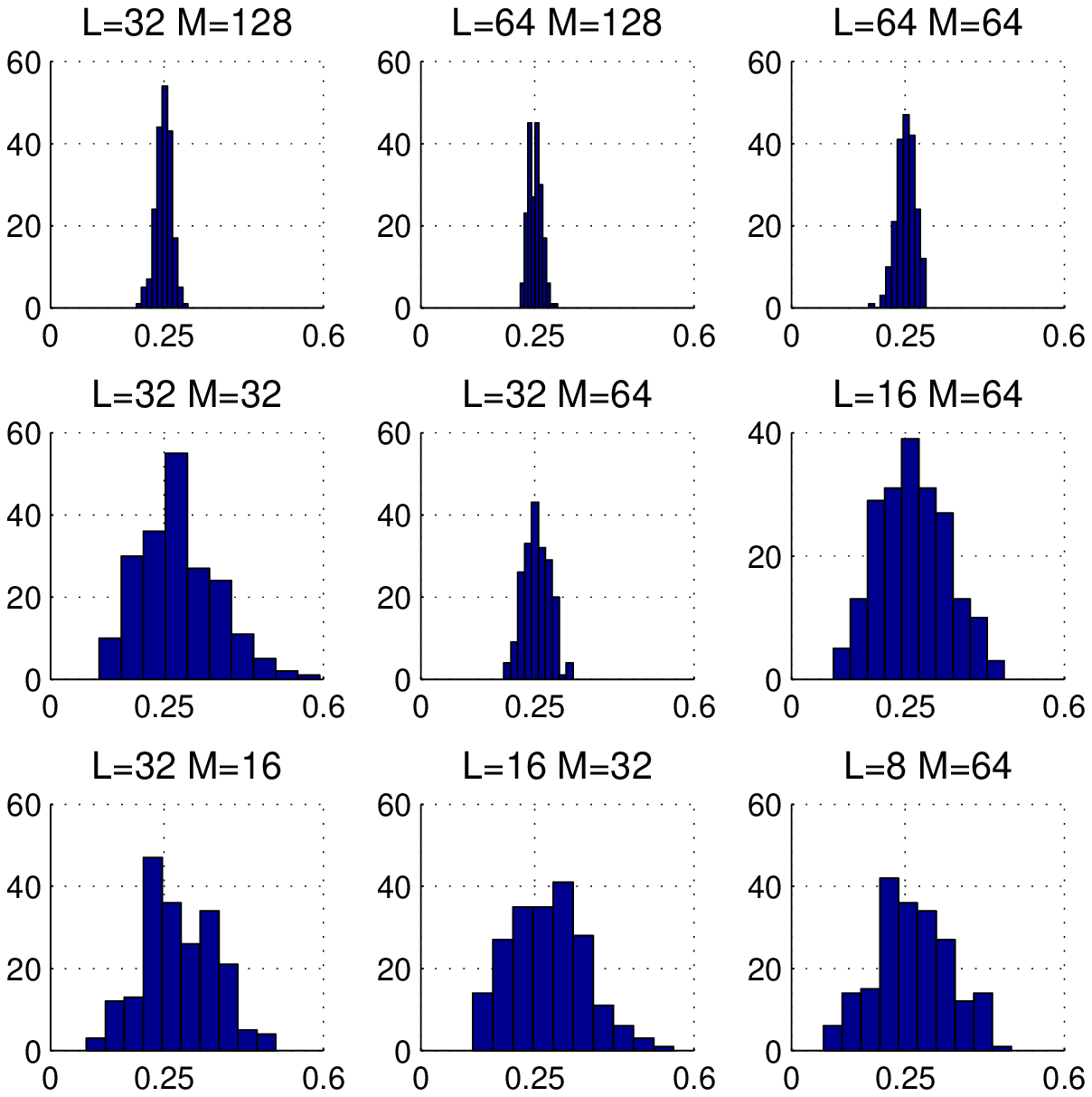}
  \caption{Histograms of 201 reconstruction results of the $x_2$-component of the center with different $L$ and $M$ at the idea setting.}
  \label{hist_x_L_M}
\end{figure}

\begin{figure}[htbp]
  \centering
  \includegraphics[width = 4in]{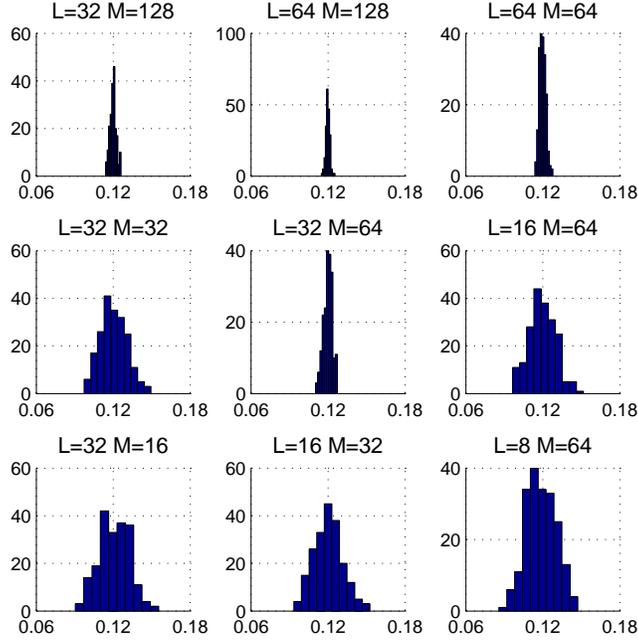}
  \caption{Histograms of 201 reconstruction results of the radius of a sound-soft disk with different $L$ and $M$ at the idea setting.}
  \label{hist_r_L_M}
\end{figure}

\begin{figure}[htbp]
  \centering
  \includegraphics[width = 5in]{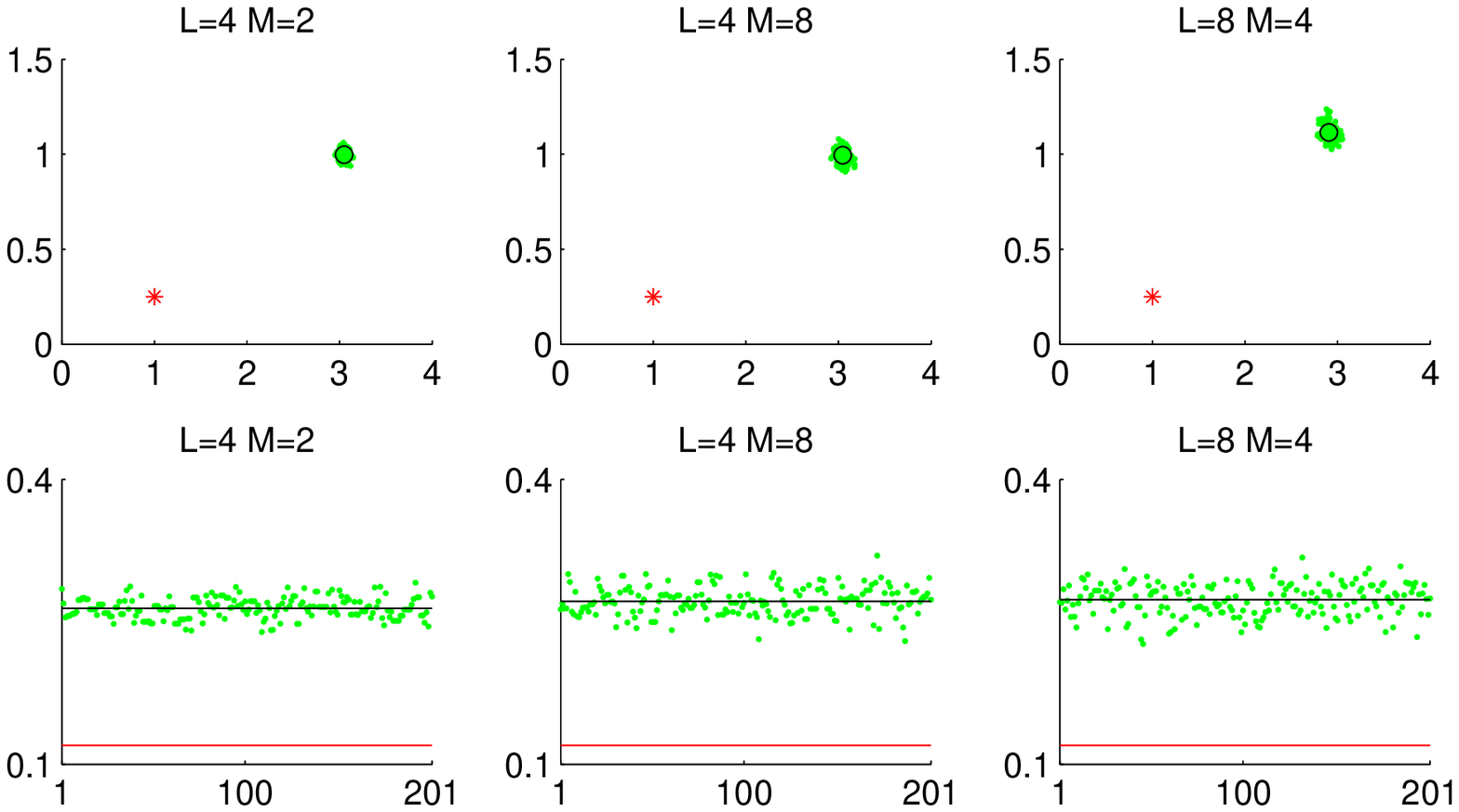}
  \caption{ Reconstructions of the center (top) and radius (bottom) of a sound-soft disk with a small number of incident waves ($L$) and observation directions ($M$) at the idea setting. The red stars $\textcolor{red}{\ast}$ (resp. lines) are the accurate center (resp. radius); the green dots $\textcolor{green}{\cdot}$ are the numerical reconstructions;  the black dots $\circ$ (lines) are the mean values of the center (resp. radius).}
  \label{z_r_vs_L_M_worse}
\end{figure}

In the following we suppose that the location of the center $(\hat{x}_{1}, \hat{x}_{2})^\top = (2, 2)^\top$ is known and the knowledge of the radius needs to be recovered. Since only one parameter of the scatter remains unknown, we make use of minimal number of incident and observation directions by setting $L=M=1$. In our tests we set incident directions $\mathbf{d}_{0} = (0, -1)^\top, \mathbf{d}_{1} = (0, 1)^\top$, observation direction $\hat{\mathbf{x}}_{1} = (0, -1)^\top$ and accurate radius $\hat{r}=1$. The numerical approximations of radius $r$ vs different wave numbers $k$ are exhibited in Figure \ref{r_vs_k_all}. For each fixed $k$, we plot the phaseless far-field pattern $|u^{\infty}(\hat{\mathbf{x}}_{1}) |$ against the radius $r$ in Figure \ref{u_vs_r_fixed_k}.

From the numerical results we conclude that an accurate approximation of the radius can be obtained if the wave number $k$ is less than a threshold. It is seen from Figure \ref{u_vs_r_fixed_k} that the function $r\rightarrow |u^\infty(\hat{\mathbf{x}}_1)|$ is monotonically increasing in $(0, R(k))$ where  $R(k)\rightarrow 0^+$ as $k\rightarrow +\infty$. This suggests that for large $k$ such as $k=97, 200, 2000$, there are more than one radii corresponding to the measured phaseless far-field pattern at $\hat{\mathbf{x}}_1$. Hence, the reconstructed radii are inaccurate. These findings are consistent with the uniqueness result of \cite{BZhang_2010_UniqueBallSingleFar}, which states that a sound-soft disk can be uniquely determined from the phaseless far-field pattern at one observation direction, provided the radius is sufficiently small for a fixed wave number. The monotonicity property of the backscattered phaseless data with respect to the radius was rigorously justified in \cite{BZhang_2010_UniqueBallSingleFar}.

\begin{figure}[htbp]
  \centering
  \includegraphics[width = 6in]{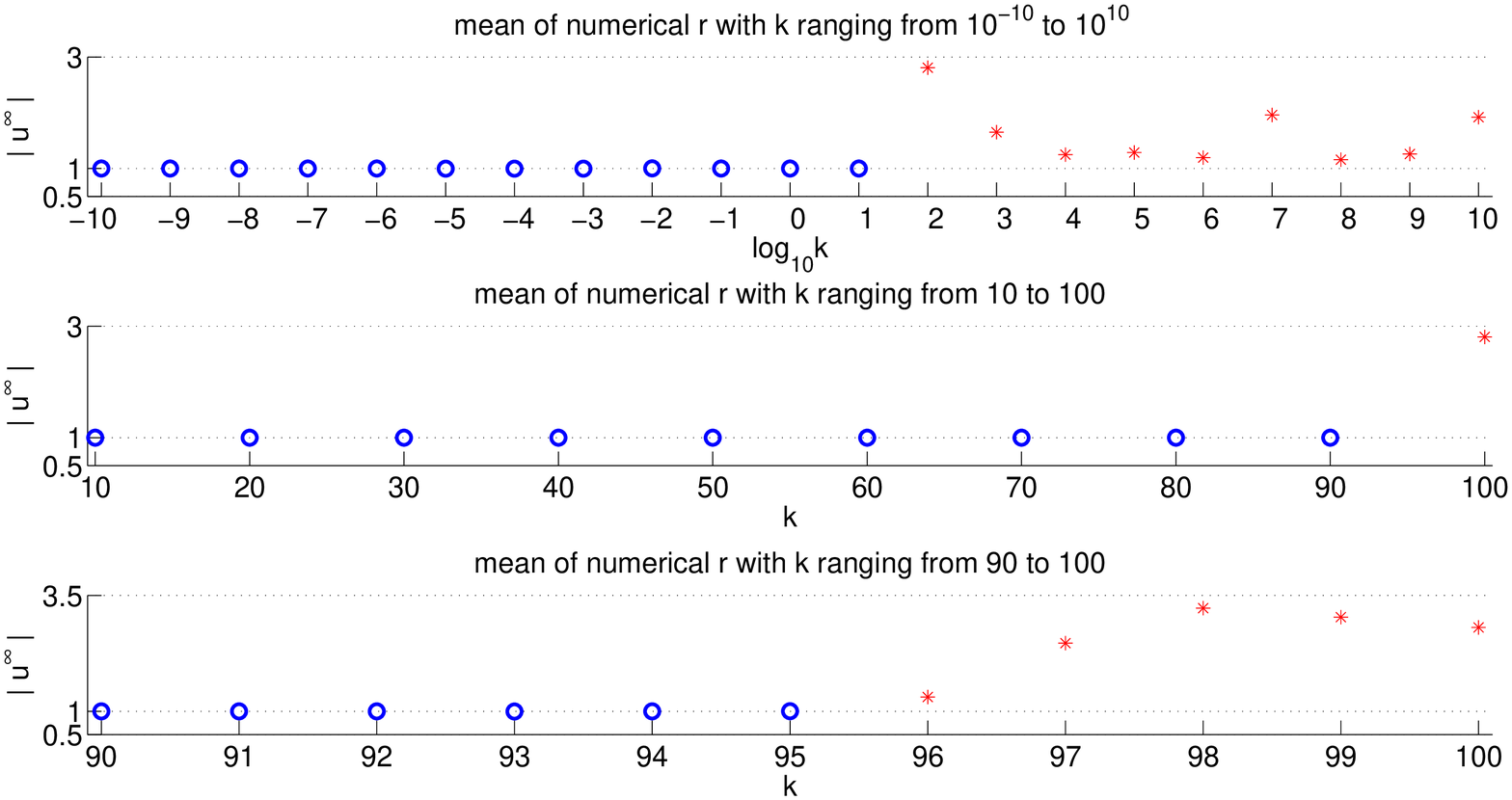}
  \caption{\tcr{The reconstructed radii with different wavenumbers $k$ ranging from $10^{-10}$ to $10^{10}$ (top), from 10 to 100 (middle) and those from 90 to 100 (bottom). Both the blue dots $\textcolor{blue}{\circ}$ and red starts $\textcolor{red}{\ast}$ are the reconstructed radii. The blue dot $\textcolor{blue}{\circ}$ represents the accurate reconstruction, while the red start $\textcolor{red}{\ast}$ represents the inaccurate reconstruction.}}
  \label{r_vs_k_all}
\end{figure}

\begin{figure}[htbp]
  \centering
  \includegraphics[width = 6in]{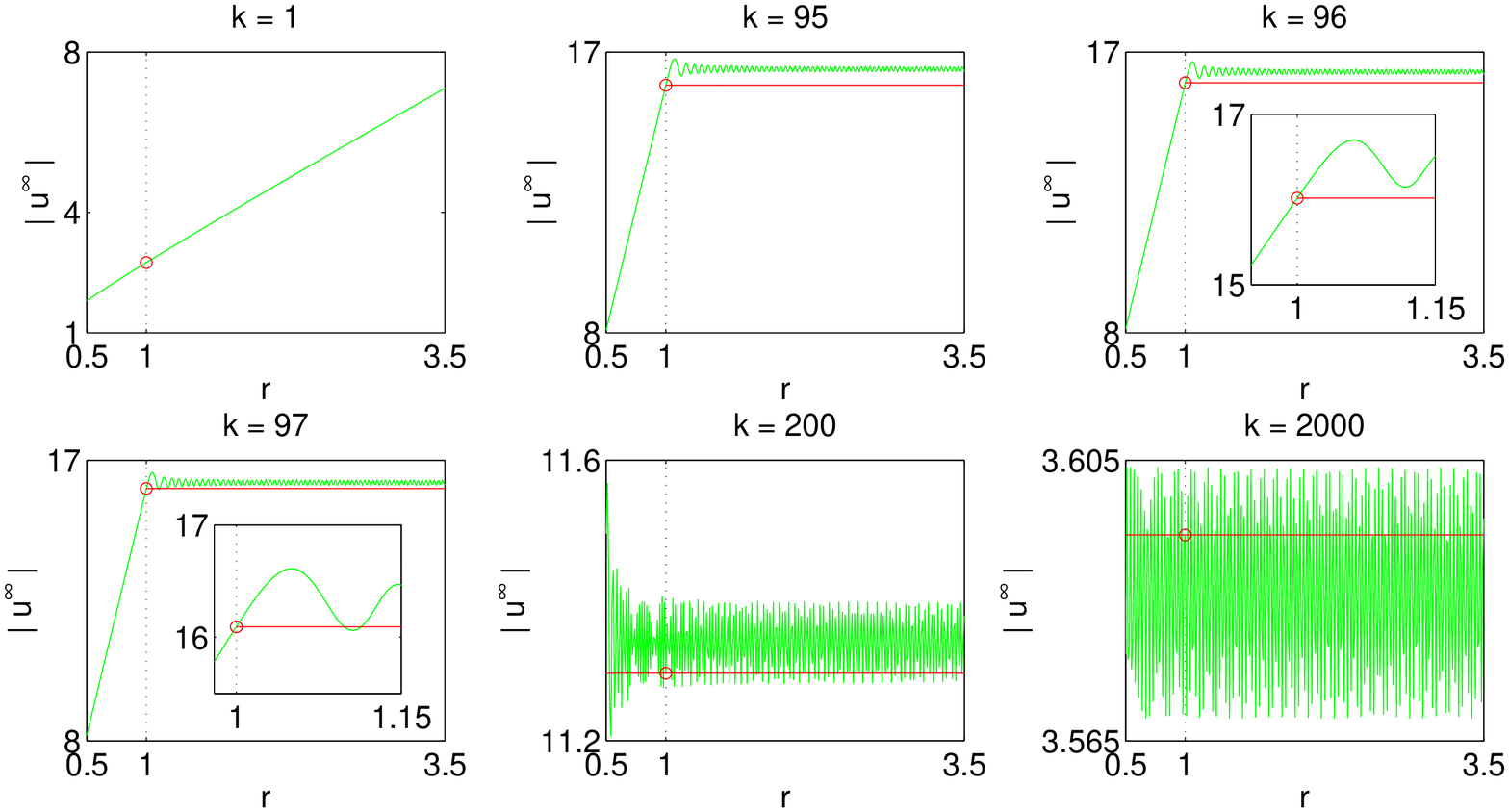}
  \caption{ Phaseless far-field pattern $|u^\infty(\hat{\mathbf{x}}_1)|$ vs radius $r$ for different wavenumbers $k$. }
  \label{u_vs_r_fixed_k}
\end{figure}

Having verified the accuracy of our inversion scheme at the ideal setting with a special sample of the observation noise, we now consider the inverse problem with a general sample of the observation noise at the noise coefficient $\sigma_{\eta} = 3\%$. In the second part, we estimate the obstacle parameters by setting $k=1$, $L=32$ and $M = 64$. We generate one sample of the observation noise, which is a matrix with $M \times L$ elements constructed by the formula \eqref{observation_noise}. The numerical approximations from the polluted observation data are exhibited in Figures \ref{z_r_noise} and \ref{hist_z_r_noise}. The mean of the numerical center and radius are $(1.0089, 0.2527)^\top$ and $0.1211$, respectively. The standard deviations of these parameters are $(0.0601, 0.0259)^\top, 0.0039$ and the relative errors are \tcr{$(0.89\%, 1.08\%)^\top, 0.92\%$}.

\begin{figure}[htbp]
  \centering
  \includegraphics[width = 3.8in]{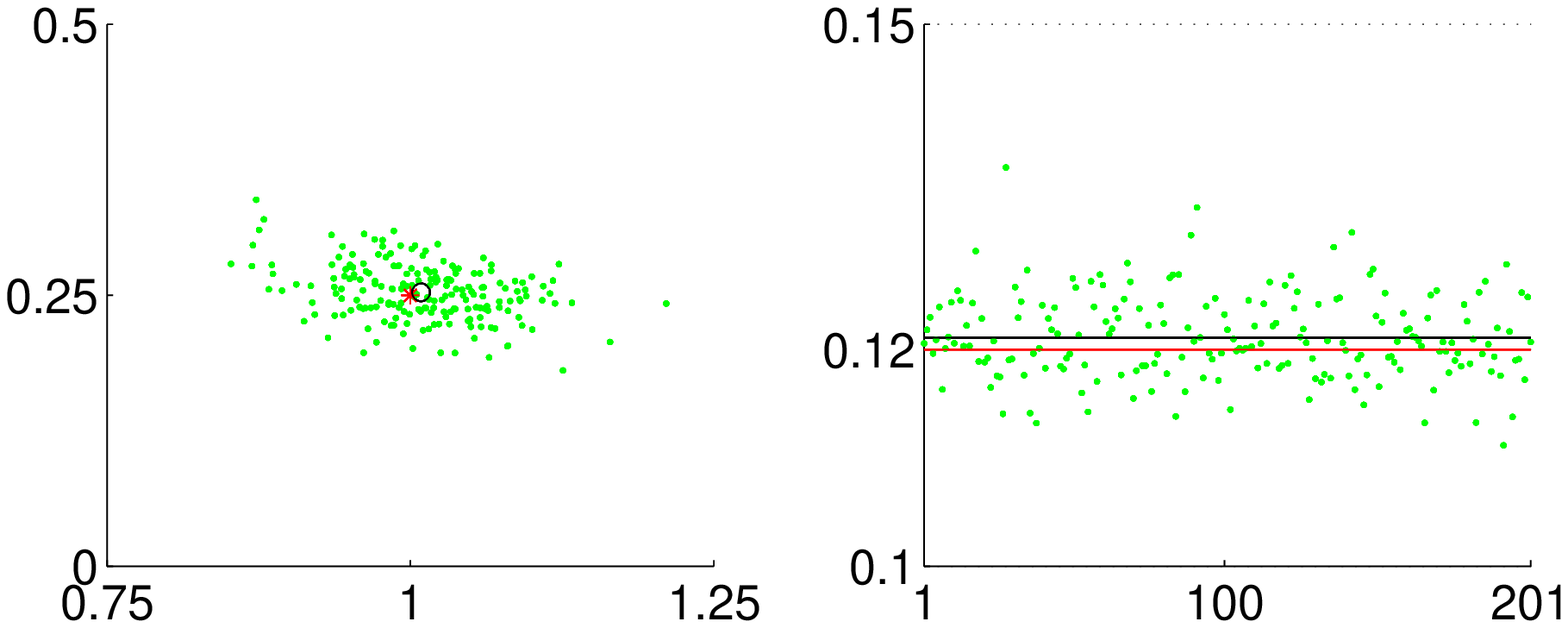}
  \caption{Reconstruction of the center (left) and radium (right) of a disk with one sample of observation noise at $\sigma_{\eta} = 3\%$. The red star $\textcolor{red}{\ast}$ (resp. line) is the accurate center (radius); the green dots $\textcolor{green}{\cdot}$ are the numerical reconstructions; the black circle $\circ$ (resp. line) is the mean of the centers and radii.  We choose $k=1$, $L=32$ and $M = 64$. }
  \label{z_r_noise}
\end{figure}

\begin{figure}[htbp]
  \centering
  \includegraphics[width = 6in]{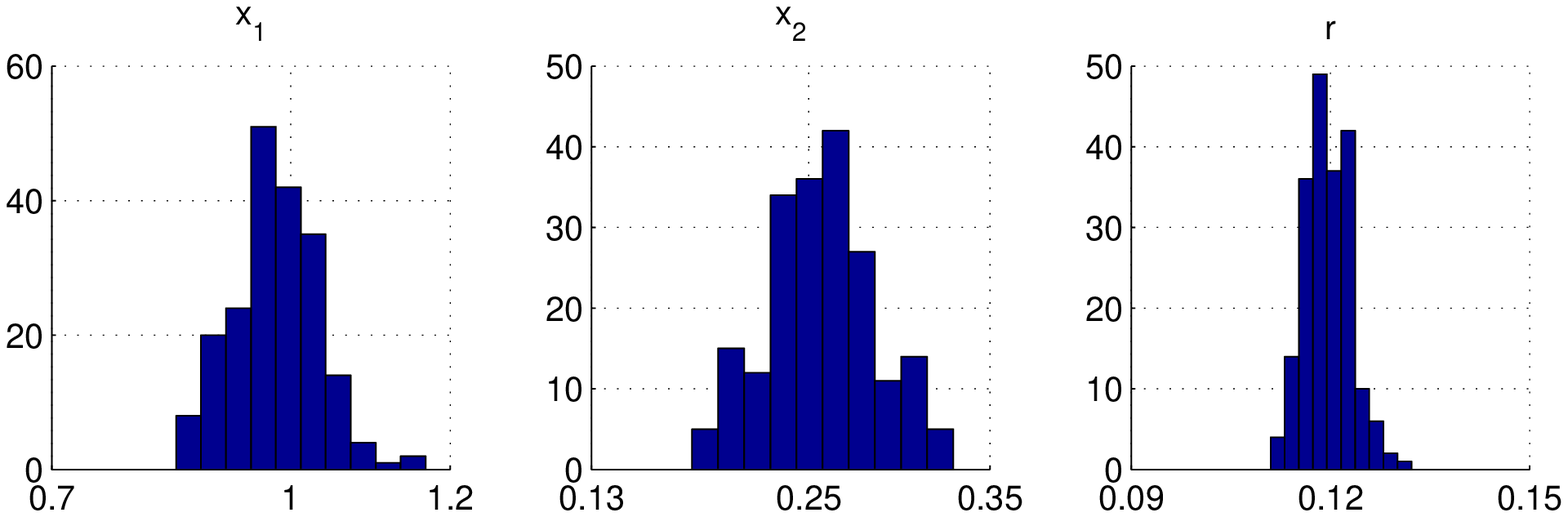}
  \caption{Histograms of the reconstructed parameters $x_{1}, x_{2}, r$ with one sample of the noisy data polluted at the level $\sigma_{\eta} = 3\%$.}
  \label{hist_z_r_noise}
\end{figure}

To demonstrate the robustness of the numerical scheme, we generate 1000 samples of the observation noise. For each sample \tcr{of the observation noise}, one can \tcr{gain a corresponding reconstruction} of the parameters $x_{1}, x_{2}, r$. Hence, we can perform statistical analysis over totally 1000 \tcr{reconstructions} of $x_{1}, x_{2}, r$. In our tests, we pollute the phaseless data at different levels $ \sigma_{\eta} = 3\%, 6\%, 9\% $ and exhibit the numerics in Table \ref{z_r_noise_3_table}, Figure \ref{z_r_noise_3} and Figure \ref{hist_z_r_noise_3}. From these reconstructed parameters we conclude that the mean and relative error are robust against the noise pollution, but the standard deviation is very sensitive to the noisy level. Further, the phaseless data with less noise give rise to a more reliable reconstruction result.

\begin{table}[htbp]
    \centering
    \caption{ The mean, standard deviation and relative error of $(x_{1}, x_{2}), r$ vs noise coefficient $\sigma_{\eta} $. }
    \label{z_r_noise_3_table}
    \begin{tabular}{c|c|c|c}
    \hline
    \  $\sigma_{\eta} $ & mean & standard deviation & relative error \\
    \hline
     $3\%$ & (0.9917, 0.2520), 0.1199 & (0.0077, 0.0039), 0.0005 & \tcr{(0.83\%, 0.79\%), \ 0.10\%} \\
    \hline
     $6\%$ & (0.9914, 0.2519), 0.1199 & (0.0137, 0.0065), 0.0009 & \tcr{(0.86\%, 0.77\%), \ 0.10\%} \\
    \hline
     $9\%$ & (0.9918, 0.2517), 0.1199 & (0.0199, 0.0095), 0.0013 & \tcr{(0.82\%, 0.69\%), \ 0.10\%} \\
    \hline
    \end{tabular}
\end{table}

\begin{figure}[htbp]
  \centering
  \includegraphics[width = 6in]{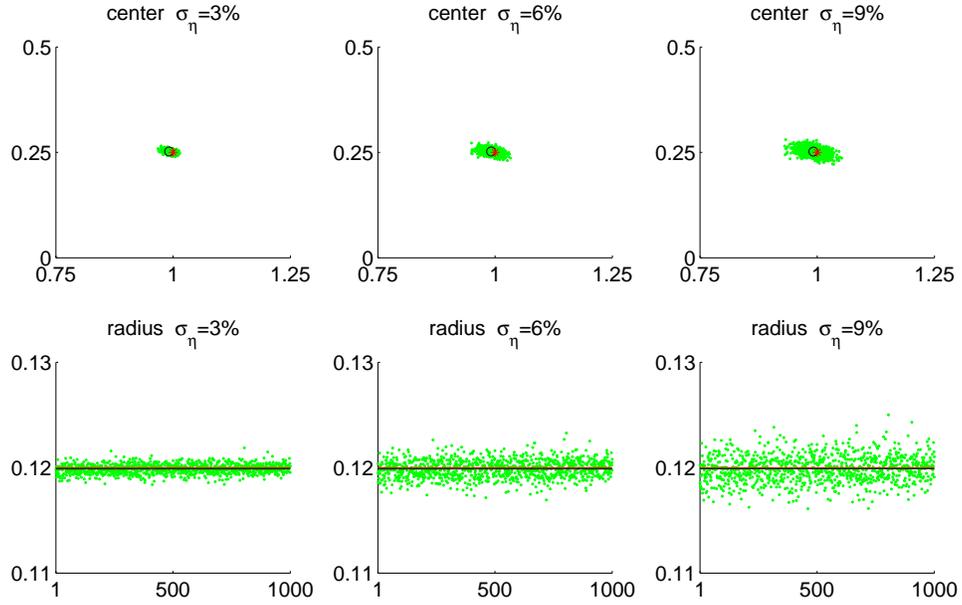}
  \caption{Reconstructions of the center (top) and radius (bottom) of a disk at different noise levels $\sigma_{\eta} = 3\%$ (left), $6\%$ (middle), $9\%$ (right). The red star $\textcolor{red}{\ast}$ (resp. line) is the accurate center (resp. radius), the green dots $\textcolor{green}{\cdot}$ are the numerical reconstructions with each sample of observation noise and the black $\circ$ (resp. line) is the mean of reconstructed centers (resp. radii) with 1000 samples of observation noise.}
  \label{z_r_noise_3}
\end{figure}

\begin{figure}[htbp]
  \centering
  \includegraphics[width = 5in]{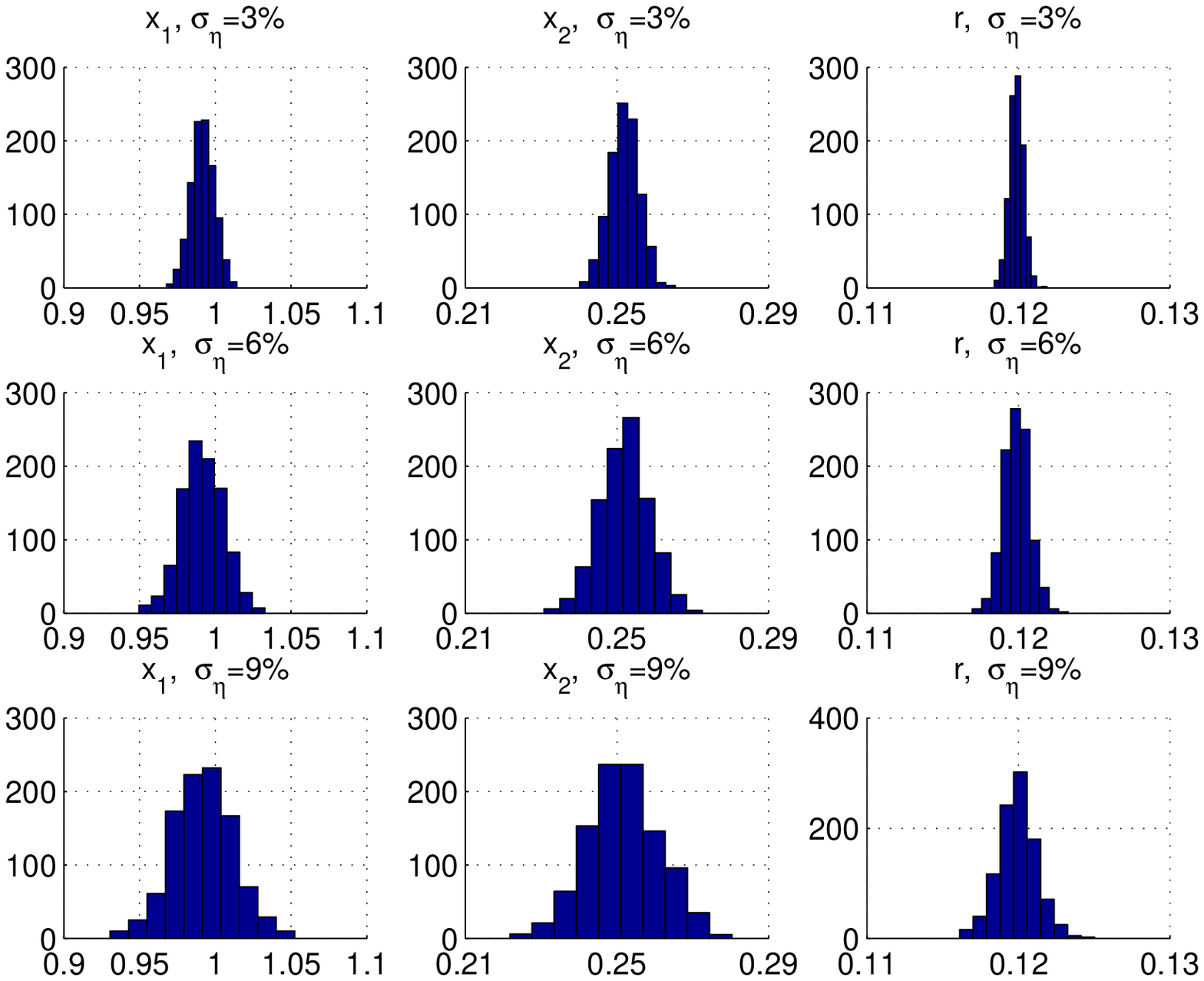}
  \caption{Histogram of 1000 numerical reconstructions of $x_{1}, x_{2}, r$ with each sample of observation noise at different noise levels $ \sigma_{\eta} = 3\%$ (top), $ \sigma_{\eta} = 6\% $ (middle), $ \sigma_{\eta} = 9\% $ (bottom).}
  \label{hist_z_r_noise_3}
\end{figure}

\subsection{Line cracks}
A crack or an open arc can be used to model the defects inside elastic and solid bodies such as bridge structures, aircraft engines and wings etc. Detection of such scatterers is important in safety and health assessment and is one of the fundamental topics in ultrasonic non-destructive testing. In this subsection, we want to recover a sound-soft crack of  line-segment-type with the starting point at $x=(x_{1}, x_{2})^{\top}\in \mathbb{R}^2$ and the ending point $y=(y_{1}, y_{2})^{\top}\in \mathbb{R}^2$. Hence, such line cracks can be characterized by $N=4$ parameters:
\begin{equation}
    \mathbf{Z}:= ( z_{1}, z_{2}, z_{3}, z_{4} )^{\top} = (x_{1}, x_{2}, y_{1}, y_{2} )^{\top}. \label{parameter_line}
\end{equation}

Unlike the scattering from disks, we do not have an analytical expression of the far-field pattern corresponding to a line crack. Below we describe the integral equation method to solve the forward scattering problem, following the numerical scheme of \cite{Kress1995_ScatterOpenArc} for general cracks. Denote by $\Gamma\subset\mathbb{R}^{2}$ an open arc of class $C^{3}$ in 2D, which can be parameterized as
\begin{equation}
    \Gamma = \left\{ z(s): s\in[-1, 1]  \right\}.
    \label{Gamma_z}
\end{equation}
Using the integral equation method, the solution $u^{sc}$ to the Helmholtz equation \eqref{Helmholtz_eq} in $\mathbb{R}^2\backslash \Gamma$ can be expressed as a single-layer potential (\cite{DColton_2013_InverseAcousticScattering})
\begin{equation}
    u^{sc}(x) = \int_{\Gamma} \Phi(x, y) \varphi(y) ds(y), \hspace{.2 cm} x\in \mathbb{R}^{2} \backslash \Gamma,
    \label{u_SingleLayer}
\end{equation}
where $\Phi(x, y)$ is the fundamental solution to the Helmholtz equation in two dimensions given by
\begin{equation}
    \Phi(x, y):= \frac{\mathrm{i}}{4}H^{(1)}_{0} (k|x-y|), \hspace{.2 cm} x, y\in \mathbb{R}^{2}, x\neq y.
    \label{Phi}
\end{equation}
Due to the Dirichlet boundary condition \eqref{DiriCond_Helmholtz_eq} on $\Gamma$, the unknown density function $\varphi$ is sought as a solution to the integral equation
\begin{equation}
    \int_{\Gamma} \Phi(x, y) \varphi(y) ds(y) = f(x), \hspace{.2 cm} x\in \Gamma,\qquad f=-u^{in}.
    \label{Phi_InterEqu}
\end{equation}
Once $\varphi$ is calculated from \eqref{Phi_InterEqu}, the far-field pattern could be expressed in the form
\begin{equation}
    u^{\infty}(\hat{x}) = \frac{ e^{\mathrm{i}\pi/4} }{ \sqrt{8\pi k} } \int_{\Gamma}e^{-ik \hat{x}\cdot y} \varphi(y) ds(y), \hspace{.2 cm} \hat{x}\in \mathbb{S}.
    \label{far_field_pattern_us}
\end{equation}
To describe the numerical scheme of \cite{Kress1995_ScatterOpenArc}, we first introduce two functions defined on $\mathbb{R}\times\mathbb{R}$ as follows
\begin{equation}
    H_{1}(t, \tau) :=
                \left\{
                        \begin{array}{ll}
                                J_{0}\left( k|z(\cos t)-z(\cos \tau)| \right) - 1, & t \neq \tau, \\
                                0, & t = \tau,
                        \end{array}
                \right. \label{Psi_InterEqu_H1}
\end{equation}
and
\begin{equation}
    H_{2}(t, \tau) :=
                \left\{
                        \begin{array}{ll}
                                \frac{\pi}{\mathrm{i}} H_{0}^{(1)} \left( k|z(\cos t)-z(\cos \tau)| \right)
                                -\left\{ 1 + H_{1}(t, \tau) \right\} \ln \left( \frac{4}{e^{2}}[\cos t-\cos \tau]^{2} \right) , & t \neq \tau, \\
                                \frac{\pi}{\mathrm{i} }+ 2C + 2 \ln \left\{ \frac{k e}{4} \left| z^{\prime}( \cos t) \right| \right\}, & t = \tau.
                        \end{array}
                \right. \label{Psi_InterEqu_H2}
\end{equation}
Here, $C \approx 0.577216$ is the Euler's constant. Then the integral equation \eqref{Phi_InterEqu} can be rephrased as
\begin{equation}
    \frac{1}{2\pi} \int_{0}^{2\pi} K(t, \tau) \psi(\tau) ds(\tau) = g(t), \hspace{.2 cm} g(t) := -2f( z( \cos t ) ),
    \label{Psi_InterEqu_ReWrite}
\end{equation}
for $t\in [0, 2\pi]$. Here,
\begin{eqnarray}
    K(t, \tau) &=& \left\{  1 + \sin^{2}\frac{t-\tau}{2} K_{1}(t, \tau) \right\}\ln\left( \frac{4}{e}\sin^{2}\frac{t-\tau}{2} \right) + \frac{ 1 }{ 2 }  H_{2}(t, \tau), \label{Psi_InterEqu_K}\\
    K_{1}(t, \tau) &=&
                \left\{
                        \begin{array}{ll}
                                \frac{ H_{1}(t, \tau) }{ \sin^{2}\left( (t-\tau)/2 \right) }, & t \neq \tau, \\
                                - k^{2} \sin^{2}\left(t\right)|z\prime(\cos t)|^{2}, & t = \tau,
                        \end{array}
                \right. \label{Psi_InterEqu_K1} \\
    \tcr{K_{2}(t, \tau)} &\tcr{=}& \tcr{\frac{ 1 }{ 2 }  H_{2}(t, \tau).} \label{Psi_InterEqu_K2}
\end{eqnarray}
The quadrature method \cite{RChapko_1993_QuadratureIntEqua} can be employed to discretize the integral equation \eqref{Psi_InterEqu_ReWrite},  based on the trigonometric interpolation with $2n$ equidistant nodal points $t_{j} := \frac{ j\pi }{ n }$, $j = 0, 1, \cdots, 2n-1$. Then the unknown solution $\psi$ to the integral equation \eqref{Psi_InterEqu_ReWrite} can be approximated by the $2n$ discrete nodal values $\left\{ \psi_{j} = \psi(t_{j}) \right\}_{j = 0}^{  2n-1 }$. Since $\psi_{k} = \psi_{2n-k}, k = 1, 2, \ldots, n-1$ with $\psi_{j} =|\sin (t_{j})|\left|z^{\prime}(\cos (t_{j}))\right| \varphi(z(\cos (t_{j})))$, it suffices to compute the $n+1$ discrete nodal values $\left\{ \psi_{j} \right\}_{j = 0}^{n}$ from the following $(n+1)\times(n+1)$ algebraic system
\begin{eqnarray}
    \sum_{j=0}^{2n-1} \psi_{j} \left\{ R_{|k-j|} + F_{ |k-j|}  K_{1} \left( t_{k}, t_{j} \right) + \frac{1}{2 n} K_{2} \left( t_{k}, t_{j} \right) \right\}
    = g \left( t_{k} \right), \quad k = 0, 1, \ldots, n,
    \label{linear_system}
\end{eqnarray}
with
\begin{eqnarray}\label{linear_system_R}
    R_{j} &:=& \frac{1}{2n} \left\{ c_{0} + 2 \sum_{m=1}^{n-1} c_{m} \cos \frac{m j \pi}{n} + (-1)^{j} c_{n} \right\},\; c_{m} := - \frac{1}{ \max\{1,|m|\} },\\ \label{linear_system_F}
    F_{j} &:=& \frac{1}{2 n}\left\{ \gamma_{0} + 2 \sum_{m=1}^{n-1} \gamma_{m} \cos \frac{m j \pi}{n} + (-1)^{j} \gamma_{n} \right\},\quad \gamma_{m} := \frac{1}{4} \left( 2 c_{m} - c_{m+1} - c_{m-1} \right).
\end{eqnarray}
Note that there are totally $n+1$ unknown discrete nodal values $\left\{ \psi_{j} \right\}_{j = 0}^{n}$ in \eqref{linear_system}, because $\psi_{j} = \psi_{n - |n-j|}$ for all $j = 0, 1, \cdots, 2n-1$. Now the far-field pattern can be approximated by
\begin{equation}\label{far_field_pattern_us_Rewrite}
        u^{\infty}(\hat{x}) =
        \frac{ e^{\mathrm{i}\pi/4} }{ \sqrt{8\pi k} } \int_{0}^{2 \pi} e^{-ik \hat{x}\cdot z(\cos \tau)} \psi(\tau) d\tau
          \approx \frac{ \pi e^{\mathrm{i}\pi/4} }{ n \sqrt{8\pi k} } \sum_{j=0}^{2n-1} e^{-ik \hat{x}\cdot z(\cos t_{j} )} \psi_{j}, \hspace{.2 cm} \hat{x}\in \mathbb{S}.
\end{equation}
If the right hand side of (\ref{linear_system}) (or (\ref{Phi_InterEqu})) is given by the incident wave \eqref{incident_wave}, we obtain the far-field pattern $u^{\infty}(\hat{\mathbf{x}}; \mathbf{Z}, \mathbf{d}_{0}, \mathbf{d}_{\ell}, k)$ where $\mathbf{Z}$ denotes the crack parameter \eqref{parameter_line}.

To set the parameters $\beta$ and $\Sigma_{pcn}$, we let $\Sigma_{pcn} = \mathbf{I}$ be the identity matrix, which is the same as the case of sound-soft disks. However, in this section the proposal variance coefficient $\beta$ is not a fixed number, but is taken as a random variable. This suggests that a random proposal variance is adopted to reconstruct line cracks. Then the proposal takes the form
\begin{equation}
    \mathbf{X} = \tcr{\mathbf{m}_{pr} + }( 1-\beta_{j}^2 )^{1/2} \tcr{(\mathbf{Z}_{j} - \mathbf{m}_{pr})} + \beta_{j} \omega, \hspace{.3cm} \omega \sim  \mathcal{N}( \mathbf{0}, \mathbf{I} ), \label{pCN_line}
\end{equation}
and the proposal variance coefficients $\beta_{j}$  need to be updated by the formulas \eqref{pCN_beta} and \eqref{update_beta}. It should be noted that, the MCMC method with a fixed proposal variance coefficient converges slowly or even does not converge after a large number of iterations, which is in contrast to the  efficient MCMC method for recovering disks. This  could partly be due to the number of reconstructed parameters, which is four in the line crack case while three for a disk. The trace of the iterations of MCMC (shown in Figure \ref{x_y_trace}) verifies the efficiency of the random proposal variance. In the first 10000 iteration steps, the trace converges fast but always drops into some fixed states, when the proposal variance coefficients $\beta_{j}$ are not appropriately updated. Numerics show that the trace can converge to and oscillate around the accurate state only after a large number of iterations.

As in the previous subsection, we set some computational parameters as follows:
\begin{itemize}
  \item The wave number $k$, the incident directions $\mathbf{d}_{\ell}, \ell=0,1,\cdots,L$, the observation directions $\hat{\mathbf{x}}_{m}, m=1,2,\cdots,M$, \tcr{$\sigma_{pr}$ of the prior distribution} and the observation pollution $\eta^{\ell}, \ell=1, \cdots, L$ are given as same as those for recovering disks;

  \item We choose \tcr{$L = M = 40$};

  \item \tcr{In the Algorithm \ref{pCN_Random_Proposal}, we choose $J_{1}=18000$, $J_{2}=5$, $J_{3}=401$;}

  \item The accurate obstacle is $\hat{\mathbf{Z}} = (\hat{x}_{1}, \hat{x}_{2}, \hat{y}_{1}, \hat{y}_{2})^{\top} = (2, 3, 4, 5)^{\top}$, that is, a line segment with the starting point $(2, 3)^\top$ and the ending point $(4, 5)^\top$;

  \item \tcr{We assume the initial guess is a line segment with the starting point $(0, 0)^\top$ and the ending point $(1, 1)^\top$. Then the mean of the prior distribution is $\mathbf{m}_{pr} = (0, 0, 1, 1)^{\top}$.}
\end{itemize}

Unfortunately we do not have the uniqueness result analogous to Theorem \ref{TH} for recovering cracks. A local uniqueness result for general cracks was proved in \cite{RKress_1995_FDiffFarFieldOperate} using a single far-field pattern \tcr{with information}. In the idea setting ($\omega^{\ell}_{m} = 0$ and $\sigma_{\eta} = 3\%$\tcr{, $\ell = 1, 2, \cdots, L$, $m = 1, 2, \cdots, M$}), the numerical approximations of the crack parameters are exhibited in Figure \ref{x_y_NoNoise}. The mean solutions of the starting and ending points are \tcr{$(2.0012,    3.0005)$, $(3.9990,    4.9992)$}, the standard deviations are \tcr{$(0.0139,    0.0164 )$, $(0.0129,    0.0170)$} and the relative errors are \tcr{$(0.06\%, 0.02\%)$, $(0.02\%, 0.02\%)$}.

\begin{figure}[htbp]
  \centering
  \includegraphics[width = 5in]{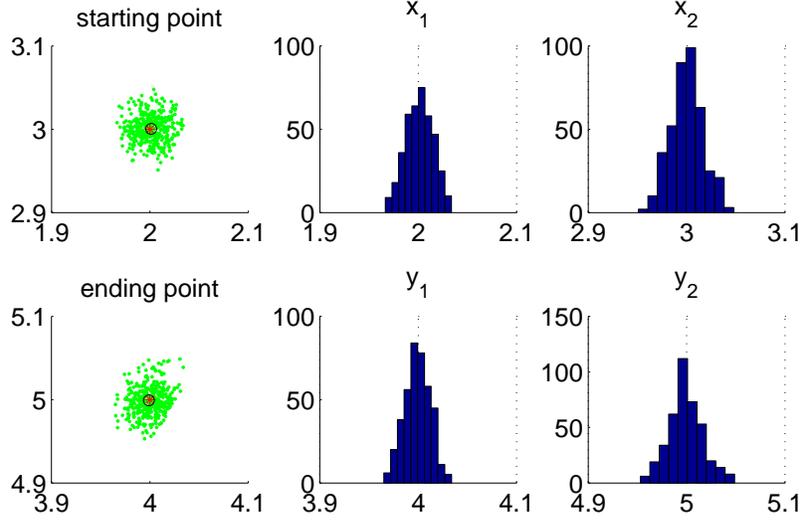}
  \caption{Histogram (center, right) and scatterer plot (left) of the reconstructions of starting point (top) and ending point (bottom) of a line crack with \tcr{the idea setting}. The red star $\textcolor{red}{\ast}$ denotes the accurate point, the green dots $\textcolor{green}{\cdot}$ are the numerical points, and the black $\circ$ is the mean of numerical points. }
  \label{x_y_NoNoise}
\end{figure}

Setting the noise coefficient $\sigma_{\eta} = 3\%$, we generate one general sample of the observation noise by the formula \eqref{observation_noise}, which takes the form of an $M \times L$ matrix. The numerical approximations from the polluted observations is exhibited in Figure \ref{x_y_scatter_hist}. The mean solutions of the numerical starting and ending points are \tcr{$(2.0052,    3.0031)$, {$(3.9958,    4.9986)$}, the standard deviations are \tcr{$(0.0135,    0.0155)$, $(0.0130,    0.0184)$} and the relative errors are \tcr{$(0.26\%, 0.10\%)$}, $(0.10\%, 0.03\%)$}. Besides, the trace of the iterations in MCMC are shown in Figure \ref{x_y_trace}.

\begin{figure}[htbp]
  \centering
  \includegraphics[width = 5in]{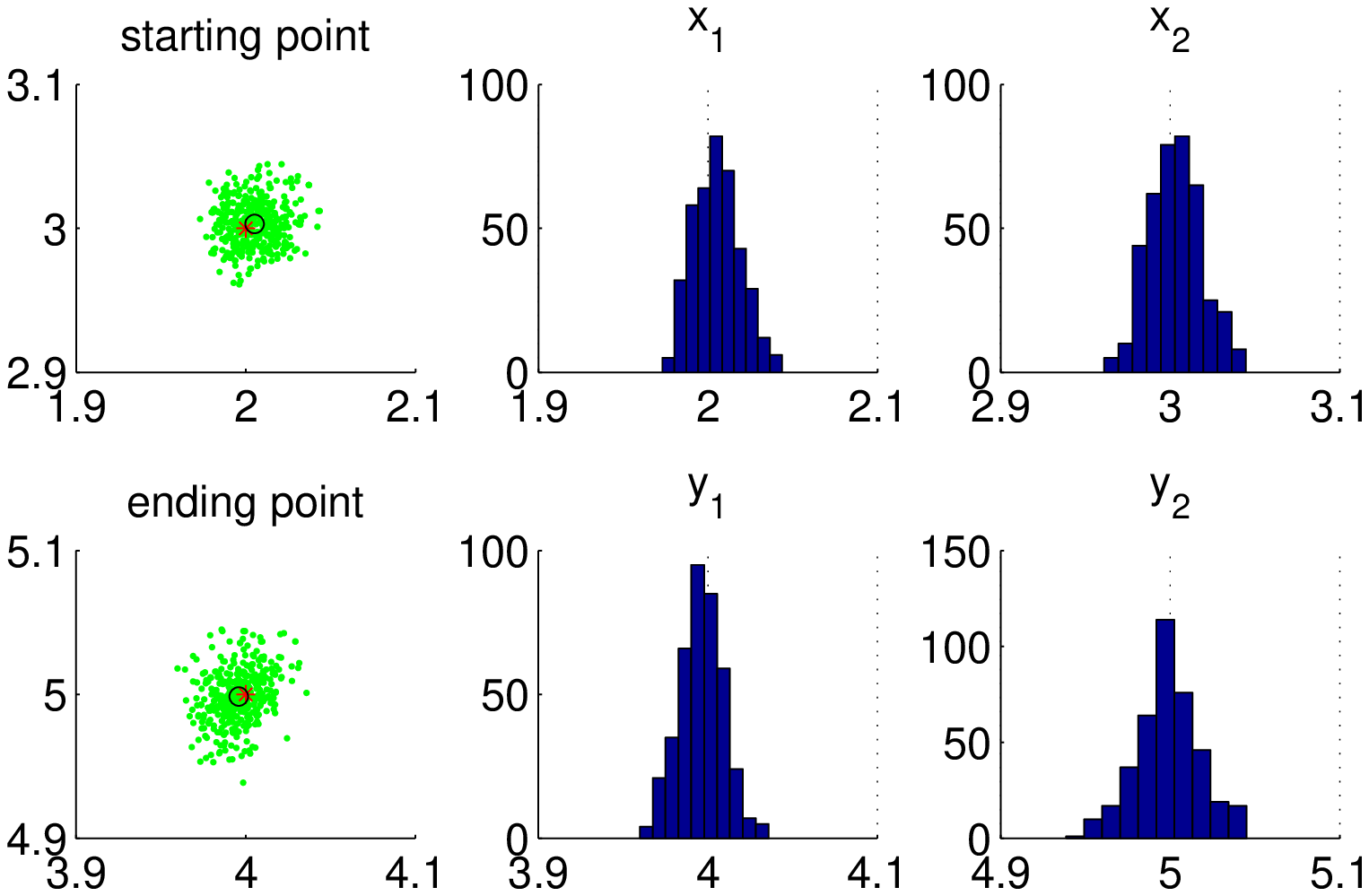}
  \caption{Histogram (center, right) and scatterer plot (left) of reconstructions of starting point (top) and ending point (bottom) of a line crack with one sample of observation noise at $\sigma_{\eta} = 3\%$. The red star $\textcolor{red}{\ast}$ denotes the accurate point, the green dots $\textcolor{green}{\cdot}$ are the numerical points, and the black $\circ$ is the mean of numerical points. }
  \label{x_y_scatter_hist}
\end{figure}

\begin{figure}[htbp]
  \centering
  \includegraphics[width = 6in]{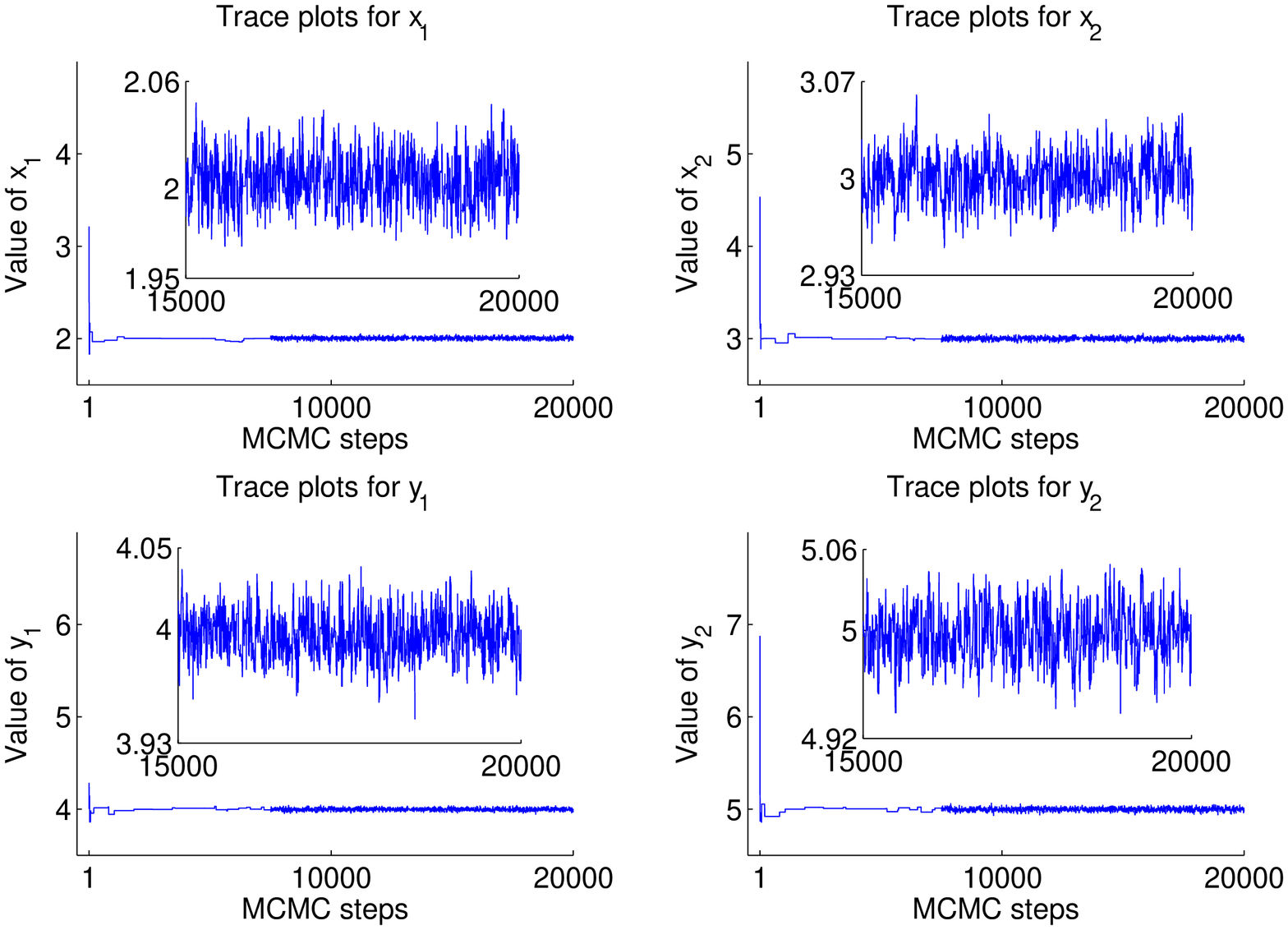}
  \caption{Trace of the iterations in the MCMC method with one sample of  the observation noise at $\sigma_{\eta} = 3\%$. The accurate line crack is $(\hat{x}_{1}, \hat{x}_{2}, \hat{y}_{1}, \hat{y}_{2})^{\top}= (2, 3, 4, 5)^{\top}$.}
  \label{x_y_trace}
\end{figure}

As done for recovering disks, we also demonstrate the robustness of the numerical scheme with 1000 samples of the observation noise at different levels $ \sigma_{\eta} = 3\%, 6\%, 9\% $. For each sample \tcr{of the observation noise}, one can \tcr{gain a corresponding reconstruction} of the parameters $x_{1}, x_{2}, y_{1}, y_{2}$. Hence, we can perform statistical analysis over totally 1000 \tcr{reconstructions} of $x_{1}, x_{2}, y_{1}, y_{2}$. The corresponding results are exhibited in Table \ref{x_y_noise_3_table}, Figure \ref{x_y_noise_3} and Figure \ref{x_y_noise_hist_3}. From these reconstructed parameters, we can draw almost the same conclusions as those for determining a sound-soft disk. The mean and relative error are robust against the noise pollution, but the standard deviation is very sensitive to the noisy level. It follows that the phaseless data with less noise give rise to a more reliable reconstruction result.

\begin{table}[htbp]
    \centering
    \caption{ The mean, standard deviation and relative error of $x_{1}, x_{2}, y_{1}, y_{2}$ vs noise coefficient $\sigma_{\eta} $. }
    \label{x_y_noise_3_table}
    \begin{tabular}{c|c|c|c}
    \hline
    \  $\sigma_{\eta} $ & mean & standard deviation \tcr{($10^{-3}$)} & relative error \tcr{($\%$)} \\
    \hline
     $3\%$ & \tcr{1.9994,    3.0055,    3.9992,   5.0029} & \tcr{3.5,    4.4,    3.4,    5.1} & \tcr{0.03, 0.18, 0.02, 0.06} \\
    \hline
     $6\%$ & \tcr{1.9989,    3.0056,    4.0013,    5.0133} & \tcr{6.2, 7.7, 6.1, 8.8} & \tcr{0.06, 0.19, 0.03, 0.27} \\
    \hline
     $9\%$ & \tcr{1.9936,    2.9994,    3.9968,    5.0081} & \tcr{8.6, 11.4, 8.9, 12.5} & \tcr{0.32, 0.02, 0.08, 0.16} \\
    \hline
    \end{tabular}
\end{table}

\begin{figure}[htbp]
  \centering
  \includegraphics[width = 6in]{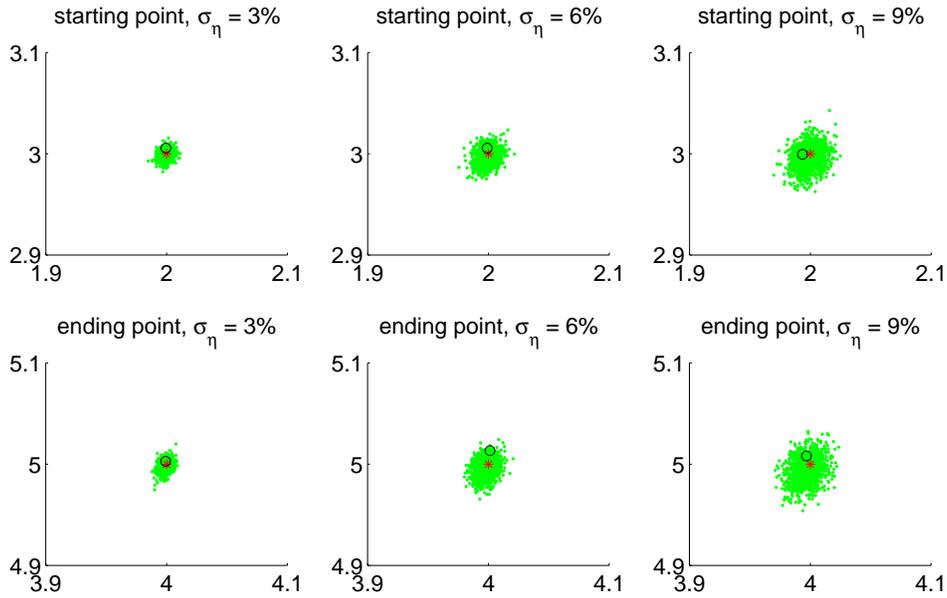}
  \caption{Reconstructions of starting point (top) and ending point (bottom) of the line crack at different noise levels $ \sigma_{\eta} = 3\%$ (top), $ \sigma_{\eta} = 6\% $ (middle), $ \sigma_{\eta} = 9\% $ (bottom). The red star $\textcolor{red}{\ast}$ denotes the accurate point, the green dots $\textcolor{green}{\cdot}$ are the numerical reconstructions with each sample of observation noise, and the black $\circ$ is the mean of numerical reconstructions with 1000 observation noises.}
  \label{x_y_noise_3}
\end{figure}

\begin{figure}[htbp]
  \centering
  \includegraphics[width = 6in]{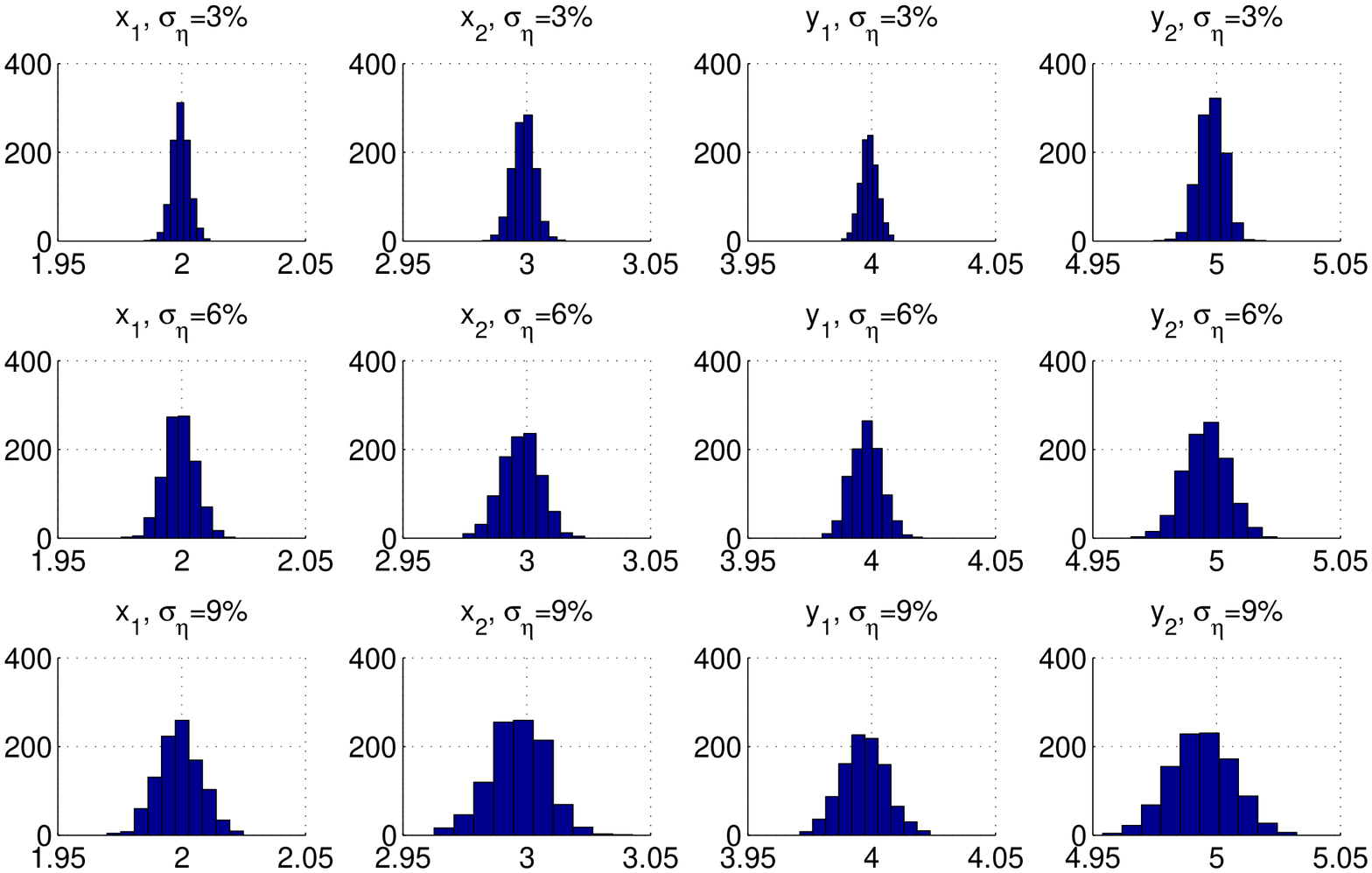}
  \caption{Histogram of 1000 numerical reconstructions of $x_{1}, x_{2}, y_{1}, y_{2}$ with each sample of the observation noise at the noise level $ \sigma_{\eta} = 3\%$ (top), $ \sigma_{\eta} = 6\% $ (middle), $ \sigma_{\eta} = 9\% $ (bottom).}
  \label{x_y_noise_hist_3}
\end{figure}

\subsection{\tcr{Kite-shaped obstacle}}

\tcr{In this subsection, we consider the following kite-shaped sound-soft obstacle $D_{1}$,
\begin{equation}
    \partial D_{1}
    = \left\{ x(t) = \big( - 0.65 + \cos t + 0.65 \cos (2t),
                           \ -3 + 1.5 \sin t \big)^{\top},
        \quad 0\leq t \leq 2\pi \right\},
    \label{kite_t}
\end{equation}
which is a benchmark acoustically impenetrable scatterer in inverse scattering problems. Obviously, the boundary of $D_{1}$ can be parameterized by six parameters
\begin{equation}
    \mathbf{Z}:= ( z_{1}, z_{2}, \cdots, z_{6} )^{\top}.
    \label{parameter_kite}
\end{equation}
Suppose that the exact parameters are given by $\hat{\mathbf{Z}}$ $= (\hat{z}_{1}$, $\hat{z}_{2}$, $\cdots$, $\hat{z}_{6})^{\top} = (-0.65, -3, 1, 0.65, 1.5, 0)^{\top}$.}
\tcr{To calculate numerical solutions of the forward problem, we adopt the MATLAB code given by \cite[Chapter 8]{GNakamura_RPotthast_2015_InverseModeling}.}
\tcr{Since there are six unknown parameters, the random proposal variance is adopted as same as in recovering line cracks.}
\tcr{As in the previous subsections, we set the computational settings as follows.}
\begin{itemize}
  \item \tcr{The wave number is $k=2$;}

  \item \tcr{The directions of plane incident waves are
    \begin{equation}
        \mathbf{d}_{\ell} =(\cos\theta_{\ell}, \  \sin\theta_{\ell})^{\top},
        \quad \theta_{\ell} = 2\pi \ell/(L+1),
        \quad \ell=0, 1,\cdots,L;
        \label{direction_plane_kite}
    \end{equation}}

  \item \tcr{The observation directions are
    \begin{equation}
        \hat{\mathbf{x}}_{m} = \big(\cos\theta_{m}, \ \sin\theta_{m} \big)^{\top},
        \quad \theta_{m} = -\pi + 2\pi (m-1)/M,
        \quad m=1,2,\cdots,M;
        \label{direction_observation_kite}
    \end{equation}}

    \item \tcr{Unless otherwise specified, we choose $L = M = 50$;}

    \item \tcr{In the Algorithm \ref{pCN_Random_Proposal}, we choose $J_{1}=180000$, $J_{2}=20$, $J_{3}=1001$;}

    \item \tcr{Since the initial guess is assumed to be a unit circle centered at the origin, the mean of the prior distribution $\pi_{pr}$ is $\mathbf{m}_{pr} = (0, 1, 0, 0, 1, 0)^{\top}$. In this example, we also assume $\sigma_{pr} = 1$;}

    \item \tcr{The observation pollution $\eta^{\ell}, \ell=1, \cdots, L$, are given as same as those for recovering disks.}

\end{itemize}

\tcr{Since the sixth exact obstacle parameter $\hat{z}_6$ is $0$, we can not use the relative error to evaluate the accuracy of numerical results in this example. Instead, the Hausdorff distance (HD) is chosen to compute the distance between  reconstructed and exact boundaries.
Recall that the Hausdorff distance between two obstacles  $\partial D_{2}$ and $\partial D_{3}$ is defined by
\begin{equation}
    d_{H}(\partial D_{2}, \partial D_{3})
    := \max\bigg\{ \sup_{x \in \partial D_{2}}
                   \inf_{y\in \partial D_{3}}
                     |x - y|, \quad
                   \sup_{y \in \partial D_{3}}
                   \inf_{x \in \partial D_{2}}
                   |y - x| \bigg\}.
    \label{Hausdorff_distance}
\end{equation}
}

\tcr{In the first part, we consider the ideal setting ($\omega^{\ell}_{m} = 0$ and $\sigma_{\eta} = 3\%$, $\ell = 1, 2, \cdots, L$, $m = 1, 2, \cdots, M$) of  observations to investigate the accuracy of our numerical method. As done for recovering disks, we discuss the accuracy of  numerical solutions for different choice of $L \ (M = L)$. In Table \ref{kite_plane_NoNoise_diff_NInOb_table} and Figure \ref{kite_plane_NoNoise_diff_NInOb}, we exhibit the numerical reconstructions and the Hausdorff distances (HD) between the numerical reconstructions and the exact boundaries. We find that the reconstructed parameters are getting more  accurate as the number of incident waves and observation directions becomes larger. The Hausdorff distance is less than $0.001$ if we choose $L = M = 50, 100$. The numerical solution with $L = M = 5$ is unreliable as illustrated in the Figure \ref{kite_plane_NoNoise_diff_NInOb}.}

\tcr{To show that this method is not sensitive to initial guess, we exhibit the initial guess, the exact boundary and numerical reconstruction in Figure \ref{kite_plane_NoNoise_NInOb50}, where we set $L = M = 50$. We can obtain an accurate numerical solution with the Hausdorff distance (HD)  being $ 1.30\times 10^{-4} $, even if the initial guess of the obstacle is separated from the exact one.}

\tcr{We draw the histogram of the selected 1001 sates with $L = M = 50$ in the Figure \ref{kite_plane_histogram_50}, which are used to construct the posterior density. From Figure \ref{kite_plane_histogram_50}, we conclude that the posterior density is a Gaussian distribution and the mean of the posterior density approaches the exact obstacle parameters.}

\begin{table}[htbp]
    \centering
    \caption{Reconstruction and Hausdorff distance (HD) vs $L \ (M=L)$.}
    \label{kite_plane_NoNoise_diff_NInOb_table}
    \begin{tabular}{c|c|c}
    \hline
      $L$ & Reconstruction & HD  \\
    \hline
      5 & -0.6441,    \ 0.6287,    \ 1.0013,    \ 0.6460,    \ 1.4934,   -0.0051 & 3.6185 \\
    \hline
      25 & -0.6488,   -3.0037,    \ 1.0047,   \ 0.6479,   \ 1.5030,   -0.0043 & 0.0021 \\
    \hline
       50 & -0.6499,   -3.0001,    \ 1.0000,   \ 0.6503,   \ 1.4998,   -0.0009 & $1.30\times 10^{-4}$ \\
    \hline
       100 & -0.6497,   -2.9995,   \ 1.0000,   \ 0.6508,   \ 1.4999,    \ 0.0006 & $6.98\times 10^{-5}$ \\
    \hline
    \end{tabular}
\end{table}

\begin{figure}[htbp]
  \centering
  \includegraphics[width = 6in]{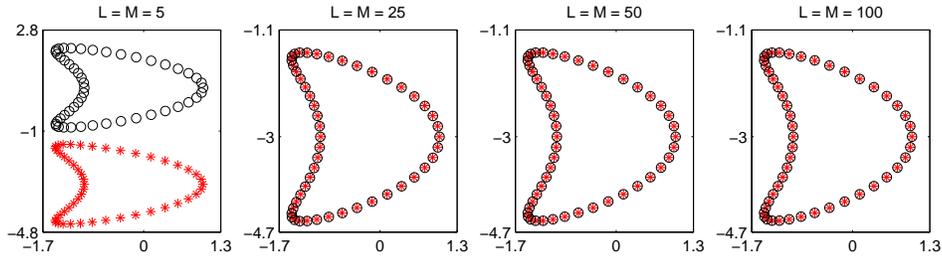}
  \caption{Reconstructions of a kite-shaped obstacle at the idea setting with different $L \ (M=L)$. The red star $\textcolor{red}{\ast}$ denotes the exact boundary and the black circle $\circ$ denotes the reconstructed boundary.}
  \label{kite_plane_NoNoise_diff_NInOb}
\end{figure}

\begin{figure}[htbp]
  \centering
  \includegraphics[width = 3in]{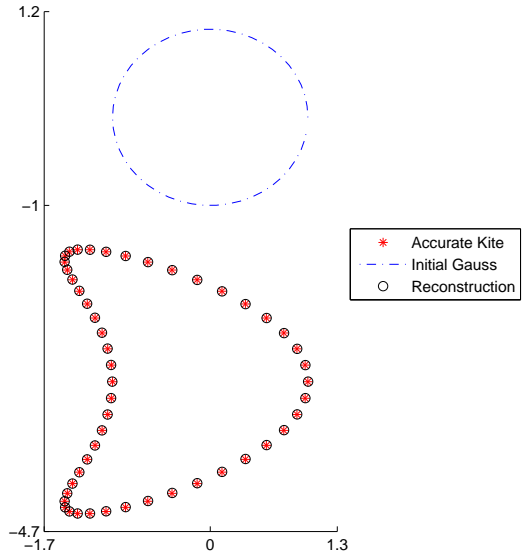}
  \caption{Reconstruction of a kite-shaped obstacle with $L=M=50$ at the idea setting.}
  \label{kite_plane_NoNoise_NInOb50}
\end{figure}

\begin{figure}[htbp]
  \centering
  \includegraphics[width = 4in]{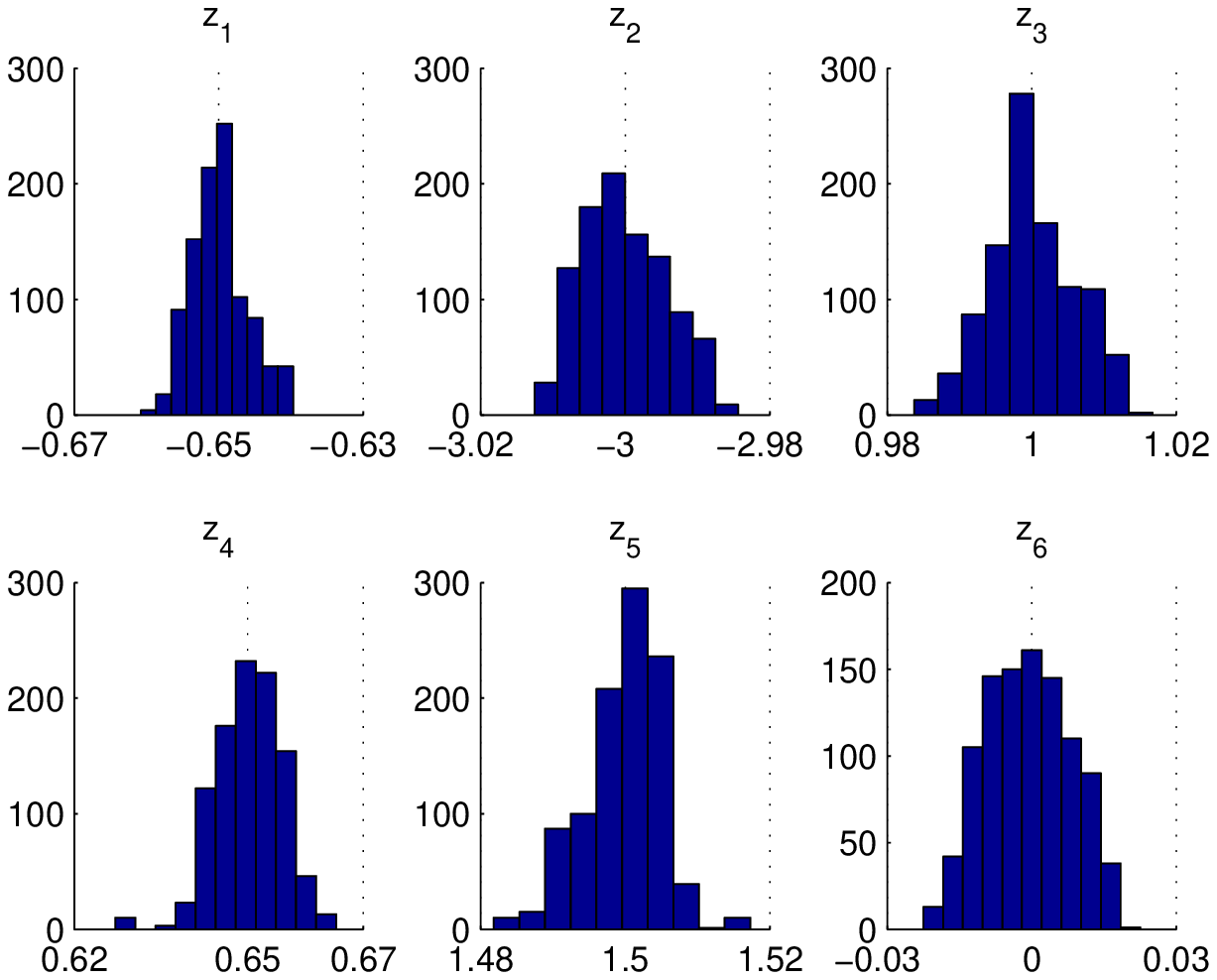}
  \caption{Histogram of the selected 1001 states with $L=M=50$ at the idea setting.}
  \label{kite_plane_histogram_50}
\end{figure}

\tcr{
Then we consider the practical setting with noise-polluted phaseless far-field data. As done for recovering disks and line cracks, we also demonstrate  robustness of the numerical scheme with 1000 samples of the observation noise at different noise levels $ \sigma_{\eta} = 3\%, 6\%, 9\%$ with $L = M = 50$. For every sample of the observation noise, one can gain a corresponding reconstruction of the parameters $z_{1}, z_{2}, \cdots, z_{6}$, and then gain the corresponding Hausdorff distance (HD) between the numerical reconstruction and the exact boundary. Then we can use standard statistical tools to analyze these 1000 reconstructions to discuss  robustness of our numerical scheme. In Table \ref{kite_plane_AddNoise_3_NInOb50_table}, we exhibit the mean of reconstructions, the mean and standard deviation (SD) of Hausdorff distances (HD) at different noise levels. In Figure \ref{kite_plane_AddNoise_3_NInOb50_curve}, we describe the 1000 reconstructions at different noise levels $\sigma_{\eta}$. In Figure \ref{kite_plane_AddNoise_3_NInOb50_1000_hist}, we show the histogram of these 1000 reconstructions, which correspond to the 1000 samples of the observation noise. We can find that the mean and standard deviation (SD) of Hausdorff distances (HD)  become larger as the noise level $\sigma_{\eta}$ is getting bigger. However, our inversion scheme is still robust against the noise pollution, since the mean and standard deviation (SD) of Hausdorff distances (HD) are very small as shown in Table \ref{kite_plane_AddNoise_3_NInOb50_table}. Further, the phaseless data with less noise give rise to a more reliable reconstruction result.}

\begin{table}[htbp]
    \centering
    \caption{Numerical solutions vs $\sigma_{\eta}$ with $L=M=50$.}
    \label{kite_plane_AddNoise_3_NInOb50_table}
    \begin{tabular}{c|c|c|c}
    \hline
    \  $\sigma_{\eta} $ & mean of reconstructions & mean of HD & SD of HD \\
    \hline
     $3\%$ & -0.6496,   -2.9988,    0.9988,    0.6506,    1.4988,    0.0010 &  0.0039 &  0.0035 \\
    \hline
     $6\%$ & -0.6493,   -2.9978,    0.9975,    0.6514,    1.4975,    0.0020 &  0.0062 &  0.0054 \\
    \hline
     $9\%$ & -0.6482,   -2.9960,    0.9959,    0.6517,    1.4962,    0.0031 &  0.0088 &  0.0073 \\
    \hline
    \end{tabular}
\end{table}

\begin{figure}[htbp]
  \centering
  \includegraphics[width = 6in]{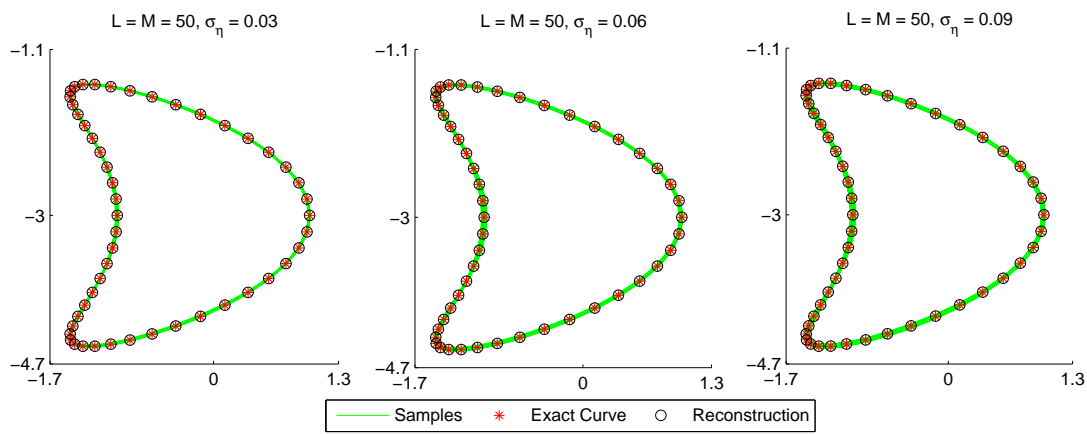}
  \caption{Reconstructions of a kite-shaped obstacle with $L=M=50$ at $\sigma_{\eta} = 3\%$ (left), $6\%$ (center), $9\%$ (right).}
  \label{kite_plane_AddNoise_3_NInOb50_curve}
\end{figure}

\begin{figure}[htbp]
  \centering
  \includegraphics[width = 5in]{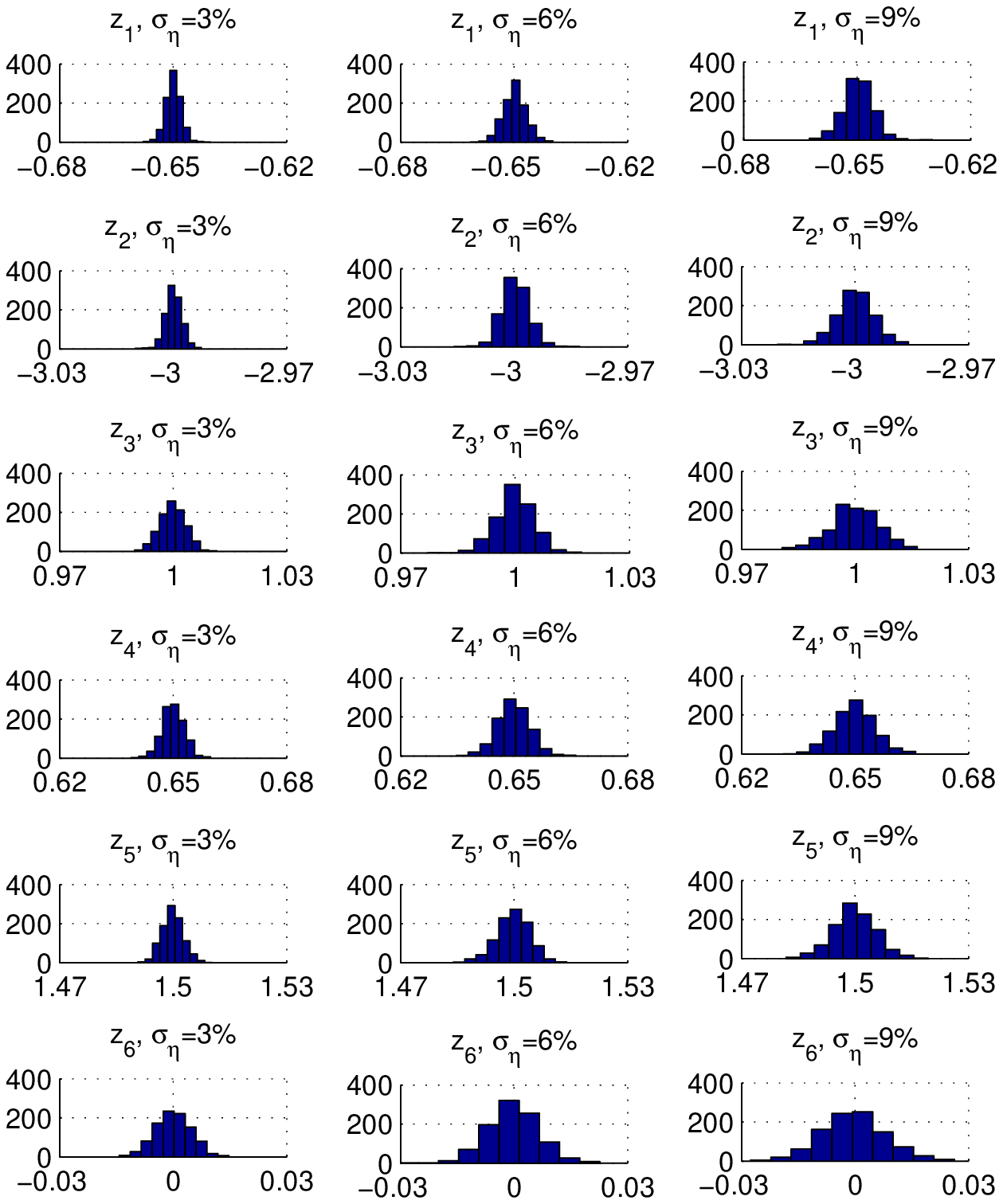}
  \caption{Histogram of these 1000 reconstructions of the six obstacle parameters of the kite-shaped obstacle with $L=M=50$ at $\sigma_{\eta} = 3\%$ (left), $6\%$ (center), $9\%$ (right).}
  \label{kite_plane_AddNoise_3_NInOb50_1000_hist}
\end{figure}

\section{Conclusion}
In this paper, we propose the Bayesian approach to inverse acoustic scattering from sound-soft disks\tcr{, line cracks and kite-shaped obstacles} with phaseless far-field data. Motivated by \cite{BZhang_2018_UniquenessScatterPhaseless}, the incoming waves are properly chosen in order to break the translational invariance of the far-field patten. Uniqueness of the inverse solution is proven for recovering a disk. When the Gaussian prior measure is given, we discuss \tcr{well posedness} of the posterior measure based on regularity properties of the deterministic direct scattering problem. Our numerics verify the efficiency of the preconditioned Crank-Nicolson algorithm with the random proposal variance. Further, increasing the number of incident and observation directions would lead to more accurate and reliable reconstructions. It is shown that the Bayesian method is robust for phaseless inverse scattering problems with respect to the observation noise. In this paper the obstacle boundary can be easily parameterized in a finite dimensional space. \tcr{Our future efforts will be devoted to recovering the shape and physical properties of more general acoustic obstacles with a large number of unknown parameters from phaseless far-field patterns. Noe that in this paper the number of unknown obstacle parameters is not larger than six, which has reduced the computational cost. For more complex scatterers, the increased computational cost of the Markov chain Monte Carlo method needs to be improved, for example, by combining the Gibbs sampling method and the stochastic surrogate model of the forward solver. Besides the idea of using superposition of two plane wave, one can also make use of a single spherical incident wave within the Bayesian framework. Research outcomes along these directions will be reported in our forthcoming publications.
}


\end{document}